\definecolor{hot}{RGB}{65,105,225}
\theoremstyle{definition}
\newtheorem{defi}{Definition}[section]
\theoremstyle{plain}
\newtheorem{thm}{Theorem}[section]
\newtheorem{prop}{Proposition}[section]
\newtheorem{lem}{Lemma}[section]
\newtheorem{coro}{Corollary}[section]
\theoremstyle{remark}
\theoremstyle{remark}
\newtheorem{rem}{Remark}[section]
\newtheorem*{ackn}{Acknowledgements}
\newtheorem*{thm*}{Theorem}
\newcommand{\Sp}{\operatorname{Spec}}
\newcommand{\dg}{\operatorname{\mathbf{dgArt}\!}{}}
\newcommand{\Ens}{\operatorname{SEns\!}{}}
\newcommand{\ens}{\operatorname{\mathbf{SEns}\!}{}}
\newcommand{\Def}{\operatorname{Def\!}{}}
\newcommand{\Hom}{\operatorname{Hom\!}{}}
\newcommand{\Mo}{\operatorname{Mod\!}{}}
\newcommand{\Mod}{\operatorname{\mathbf{Mod}\!}{}}
\newcommand{\dSc}{\operatorname{\mathbf{dSch}\!}{}}
\newcommand{\cdga}{\operatorname{cdga\!}{}}
\newcommand{\cdg}{\operatorname{\mathbf{cdga}\!}{}}
\newcommand{\rt}{\operatorname{\mathbb{R}\Gamma\!}{}}
\newcommand{\Spec}{\operatorname{Spec\!}{}}
\newcommand{\free}{\operatorname{Free\!}{}}
\newcommand{\calg}{\operatorname{CAlg\!}{}}
\newcommand{\Calg}{\operatorname{\mathbf{CAlg}\!}{}}
\newcommand{\Li}{\operatorname{Lie\!}{}}
\newcommand{\Lie}{\operatorname{\mathbf{Lie}\!}{}}
\newcommand{\Cn}{\operatorname{Cn\!}{}}
\newcommand{\Map}{\operatorname{Map\!}{}}
\newcommand{\se}{\operatorname{\mathbf{Sets}\!}{}}
\newcommand{\coli}{\operatorname{colim\!}{}}
\newcommand{\Art}{\operatorname{\mathbf{Art}\!}{}}
\title{Semi-prorepresentability of formal moduli problems and equivariant structures}
\author{An-Khuong DOAN}
\address{An-Khuong DOAN, IMJ-PRG, UMR 7586, Sorbonne Université,  Case 247, 4 place Jussieu, 75252 Paris Cedex 05, France}
\email{an-khuong.doan@imj-prg.fr }
\thanks{ }
\begin{document}

\subjclass[2010]{14D15, 14B10, 13D10}

\date{June 26, 2023.}

\dedicatory{ }

\keywords{Deformation theory, Moduli theory, Formal moduli problem, Equivariance structure}

\begin{abstract}  We generalize the notion of semi-universality in the classical deformation problems to the context of derived deformation theories. A criterion for a formal moduli problem to be semi-prorepresentable is produced. This can be seen as an analogue of Schlessinger's conditions for a functor of Artinian rings to have a semi-universal element. We also give a sufficient condition for a semi-prorepresentable formal moduli problem to admit a $G$-equivariant structure in a sense specified below, where $G$ is a linearly reductive group. Finally, by making use of these criteria, we derive many classical results including the existence of ($G$-equivariant) formal semi-universal deformations of algebraic schemes and that of complex compact manifolds.
\end{abstract}

\maketitle
\tableofcontents

\section{Introduction} \label{sec3.1} The theory of deformations of algebraic schemes with algebraic group actions is first studied by the pioneering work of Pinkham (see \cite{6}) in which affine cones with $\mathbb{G}_m$-actions are taken into account. Six years later, Rim obtains a far-reaching result which claims that if $G$ is a linearly reductive group acting algebraically on an algebraic scheme $X_0$ where $X_0$ is supposed to be either an affine scheme with at most isolated singularities or a complete algebraic variety then a $G$-equivariant formal semi-universal deformation of $X_0$ exists, unique up to $G$-equivariant isomorphism (see \cite{Rim}). In the language of functors
of Artin rings, this result can be rephrased as follows. Let $k$ be an algebraically closed field and $\Art_k$ (resp. $\widehat{\Art}_k$) be the category of local artinian $k$-algebras (resp. complete local noetherian $k$-algebras) with residue field $k$. The functor $F_{X_0}$: $\Art_k \rightarrow \se$ which associates to each local artinian $k$-algebra $A$, the set of flat morphisms of schemes $X\rightarrow \Spec(A)$ with an isomorphism $$X\times_{\Spec(A)}\Spec(k) \cong X_0$$ has a formal semi-universal element, i.e. there exists a pro-object $R$ in $\widehat{\Art}_k$ and an element $\hat{u} \in \widehat{F_{X_0}}(R)$ such that the morphism of functors $$\Hom_{\widehat{\Art}_k}(R,-)\rightarrow F_{X_0}$$ defined by $\hat{u}$ is smooth and such that  $$\Hom_{\widehat{\Art}_k}(R,k[\epsilon]/(\epsilon^2))\rightarrow F_{X_0}(k[\epsilon]/(\epsilon^2))$$ is bijective, where $\widehat{F_{X_0}}$ is the extension of $F_{X_0}$ on $\widehat{\Art}_k$ (see \cite[$\S$2.2]{12} for more details) and $k[\epsilon]/(\epsilon^2)$ is the ring of dual numbers. Furthermore, this formal semi-universal element can be made $G$-equivariant. A recently-constructed counter-example in \cite{1} has shown that the reductiveness assumption on $G$ turns out to be optimal.  In general, hardly is $F_{X_0}$ prorepresentable by a pro-object due to the existence of non-trivial automorphisms of $X_0$ as always. Therefore, the smooth morphism $$\Hom_{\widehat{\Art}_k}(R,-)\rightarrow F_{X_0}$$ can be considered the best formal approximation of $F_{X_0}$ that we can expect. A similar result on the existence of $G$-equivariant Kuranishi family of compact complex manifolds is obtained as well in \cite{2}. In other words, the functor of Artin rings associated to the deformation problem of a given complex compact manifold admits a $G$-equivariant semi-universal element. The main difference here is that on the analytic side, all deformations are required to be convergent while on the algebraic side, they might exist formally.

Besides, a well-known philosophy of Drindfeld states that: ``If $X$ is a moduli space over a field $k$ of characteristic zero, then a formal neighborhood of any point $x \in X$ is controlled by a differential graded Lie algebra" of which Lurie's paper  \cite{4} and Pridham's one \cite{8} have given a rigorous formulation. Namely, instead of working with $\Art_k$, they work with the category of differential graded commutative artinian augmented $k$-algebras, denoted by $\dg_k$ and a formal moduli problem in his sense is defined to be a functor from $\dg_k \rightarrow \ens$ satisfying certain exactness conditions, where $\ens$ is the $\infty$-category of simplicial sets. Then they prove that there is an equivalence of $\infty$-categories between the homotopic category of formal moduli problems and that of differential graded Lie algebras. Furthermore, the prorepresentability (which corresponds to the notion of universality in the classical sense) of a formal moduli problem is reduced to checking some cohomological conditions on its associated differential graded Lie algebra, which is feasible for most of natural formal moduli problems that we encounter in reality. This can be viewed as an extremely astonishing generalization of Schlessinger's work on functors of artinian rings (cf. \cite{11}). 

However, the notion of semi-universality apparently does not exist in the $\infty$-framework. A germ of this notion might be retrieved in Manetti's work (cf. \cite[Def. 2.1]{35}) but it has not still been fully simplicially generalized on the target.  Therefore, in this paper, our aim is to fully define such a notion which we shall call ``semi-prorepresentability" (cf. Definition \ref{d2.2}). This notion should generalize the notion of semi-universality given by M. Schlessinger. Then we prove the semi-prorepresentability for a class of formal moduli problems of which the formal moduli problem $\Def_{X_0}$ associated to derived deformations of algebraic schemes or to those of complex compact manifolds (which is a natural extension of the functor $F_{X_0}$ in the derived literature) is a typical example (cf. Theorem \ref{t2.2}). This gives us an algebraic way to recover the formal existence of semi-universal deformations in the classical setting. At last, we will prove a theorem of Rim's type. More precisely, we would like to provide a $G$-equivariant structure to the pro-object in $\dg_k$, which semi-prorepresents $\Def_{X_0}$. Inspired by the spirit of Lurie's equivalence, we shall carry things out on the corresponding differential graded Lie algebra (see Definition \ref{d3.3.3} and Theorem \ref{t2.3}). Once again, Rim's result  is just an immediate corollary of this. All of this could be  also somehow regarded as an equivariant version of the classicial Nijenhuis-Richardson's trick (cf. \cite{26}). Moreover, while revising the manuscript, we were informed of the existence of the beautiful paper \cite{25} where similar results might be obtained, using a much more modern approach: $L_\infty$-models.

Let us now outline the organization of this paper. We first, in $\S$\ref{sec3.2}, give an overview of the $\infty$-equivalence between the category of formal moduli problems and that of differential graded Lie algebras. This can be served as a quick gentle introduction to Lurie's work in \cite{4}.  In $\S$\ref{sec3.3}, we shall introduce the notion of semi-prorepresentability and prove that any formal moduli problem in Lurie's sense is semi-prorepresentable. If further the associated differential graded Lie algebra of this formal moduli problem is equipped with an appropriate action of some linearly reductive group $G$, we show that the corresponding semi-prorepresentable pro-object can be equipped with a versal compatible $G$-action. In $\S$\ref{sec3.4}, we recall the famous differential graded Lie algebras which control analytic deformations of a given complex compact manifold or algebraic deformations of a given scheme. Finally, the existence of ($G$-equivariant) formal semi-universal deformation of algebraic schemes and that of complex compact manifolds are just immediate consequences of what we have done in $\S$\ref{sec3.3}.
\newline

\textbf{Conventions and notations}: \begin{enumerate}
\item[$\bullet$] A field of characteristic $0$ will be always denoted by $k$.
\item[$\bullet$] dgLa is the abbreviation of differential graded Lie $k$-algebra while cgda means commutative differential graded augmented $k$-algebra.
\item[$\bullet$] $\Mo_k$ is the category of cochain complexes of $k$-modules and $\Mod_k$ is the corresponding $\infty$-category.
\item[$\bullet$] $\Li_k$ is the category of differential graded Lie $k$-algebras and $\Lie_k$ is the corresponding $\infty$-category.
\item[$\bullet$] $\cdga_k$ is the category of commutative differential graded augmented $k$-algebras and $\cdg_k$ is the corresponding $\infty$-category.
\item[$\bullet$] $\dg_k$ denotes the full sub-category of $\cdga_k$ consisting of commutative differential graded artinian algebras cohomologically concentrated in non-positive degrees.
\item[$\bullet$] $\Art_k$ denotes the category of local artinian $k$-algebras with residue field $k$.
\item[$\bullet$] $\ens$ is the category of simplicial sets.
\item[$\bullet$] fmp is the abbreviation of formal moduli problem.
\item[$\bullet$] $\mathcal{FMP}$  is the homotopic category of formal moduli problems. 
\end{enumerate}

\begin{ackn}  We would like to profoundly thank Prof. Bertrand To\"{e}n for explaining some notions in \cite{13} and \cite{14} and Prof. Julien Grivaux for many precious discussions. We are specially thankful to the referee whose dedicated
work led to a great amelioration of this paper.
\end{ackn}

\section{Formal moduli problem revisited} \label{sec3.2}
\subsection{Presentable $\infty$-categories} A glimpse on presentable $\infty$-categories is provided in this section. Let $\mathbf{\Delta}$ be the category of finite ordinal numbers with order-preserving maps between them. Concretely, the objects of $\mathbf{\Delta}$ are strings 
$$\mathbf{n}:\; 0\rightarrow 1\rightarrow \cdots \rightarrow n$$ where $n$ is a positive integer and morphisms of $\mathbf{\Delta}$ are order-preserving set functors $\mathbf{m}\rightarrow\mathbf{n}$. For each $\mathbf{n}\in \mathbf{\Delta}$, consider the following morphisms:
\begin{align*}
d^i : \mathbf{n} -\mathbf{1}&\rightarrow \mathbf{n} \\
 \left (0\rightarrow 1\rightarrow \cdots \rightarrow n-1  \right )&\mapsto \left (0\rightarrow 1\rightarrow \cdots \rightarrow i-1\rightarrow i+1\rightarrow\cdots\rightarrow n  \right )
\end{align*} and \begin{align*}
s^j : \mathbf{n} +\mathbf{1}&\rightarrow \mathbf{n} \\
 \left (0\rightarrow 1\rightarrow \cdots \rightarrow n+1  \right )&\mapsto \left (0\rightarrow 1\rightarrow \cdots \rightarrow j  {\rightarrow} j\rightarrow\cdots\rightarrow n  \right ).
\end{align*} The former ones are called cofaces while the latter ones are called codegeneracies. They satisfies the following cosimplicial identities
\begin{equation}\label{e0.01 }
\begin{cases}
d^jd^i=d^id^{j-1} & \text{ if } i<j \\ 
 s^jd^i=d^is^{j-1}& \text{ if } i<j \\ 
 s^jd^j=\mathrm{Id}=s^jd^{j+1}&  \\ 
 s^jd^i=d^{i-1}s^j& \text{ if } i>j+1\\ 
s^js^i=s^is^{j+1} & \text{ if } i\leq j .
\end{cases}
\end{equation}
The maps $d^i$, $s^j$ together with these relations constitute a set of generators and relations for $\mathbf{\Delta}$ (cf. \cite{34}).\begin{defi} A simplicial set is a contravariant functor $X:\; \mathbf{\Delta}\rightarrow \se$. A map of simplicial sets $f:\; X \rightarrow Y$ is simply a natural transformation of contravariant set-valued functors defined over $\mathbf{\Delta}$. 
\end{defi} Using the generators $d^i$, $s^j$ and the relations (\ref{e0.01 }), to give a simplicial set $Y$ is equivalent to giving sets $Y_n$, $n\geq 0$ together with maps
$$\begin{cases}
d_i:Y_n\rightarrow Y_{n-1}, & 0\leq i \leq n \;(\text{faces}) \\ 
 s_j:Y_n\rightarrow Y_{n+1},& 0\leq j \leq n \;(\text{degeneracies})
\end{cases}$$ satisfying the simplicial identities
$$\begin{cases}
d_id_j=d_{j-1}d_i & \text{ if } i<j \\ 
 d_is_j=s_{j-1}d_i& \text{ if } i<j \\ 
 d_js_j=\mathrm{Id}=d_{j+1}s_j&  \\ 
 d_is_j=s_jd_{i-1}& \text{ if } i>j+1\\ 
s_is_j=s_{j+1}s_i & \text{ if } i\leq j .
\end{cases}$$ We denote the category of simplicial sets by $\Ens$ and refer the reader to \cite{33} for a complete study of this category.
\begin{defi}\begin{enumerate}
\item[(1)] The standard $n$-simplex in the category $\Ens$ is defined  by
$$\Delta^n=\Hom_{\mathbf{\Delta}}(\cdot, \mathbf{n}).$$
\item[(2)] Denote by $\iota_n$ the standard simplex $\mathrm{Id}_\mathbf{n}\in \Hom_{\mathbf{\Delta}}(\mathbf{n},\mathbf{n})$. For $0\leq k \leq n$, the $k$-horn $\Lambda_{k}^n$ of $\Delta^n$ is the union of all the faces $d_j(\iota_n)$ except $d_k(\iota_n)$.
\end{enumerate}
\end{defi}
\tikzset{every picture/.style={line width=0.75pt}} 
\begin{center}

\begin{tikzpicture}[x=0.75pt,y=0.75pt,yscale=-1,xscale=1]

\draw    (328.13,188.38) -- (367.69,90.69) ;
\draw [shift={(368.44,88.84)}, rotate = 472.04] [color={rgb, 255:red, 0; green, 0; blue, 0 }  ][line width=0.75]    (10.93,-3.29) .. controls (6.95,-1.4) and (3.31,-0.3) .. (0,0) .. controls (3.31,0.3) and (6.95,1.4) .. (10.93,3.29)   ;
\draw    (420.75,189.28) -- (374.49,90.92) ;
\draw [shift={(373.64,89.11)}, rotate = 424.81] [color={rgb, 255:red, 0; green, 0; blue, 0 }  ][line width=0.75]    (10.93,-3.29) .. controls (6.95,-1.4) and (3.31,-0.3) .. (0,0) .. controls (3.31,0.3) and (6.95,1.4) .. (10.93,3.29)   ;
\draw    (157.07,188.55) -- (199.59,90.67) ;
\draw [shift={(200.39,88.84)}, rotate = 473.48] [color={rgb, 255:red, 0; green, 0; blue, 0 }  ][line width=0.75]    (10.93,-3.29) .. controls (6.95,-1.4) and (3.31,-0.3) .. (0,0) .. controls (3.31,0.3) and (6.95,1.4) .. (10.93,3.29)   ;
\draw    (251.95,187.46) -- (206.46,90.65) ;
\draw [shift={(205.61,88.84)}, rotate = 424.83000000000004] [color={rgb, 255:red, 0; green, 0; blue, 0 }  ][line width=0.75]    (10.93,-3.29) .. controls (6.95,-1.4) and (3.31,-0.3) .. (0,0) .. controls (3.31,0.3) and (6.95,1.4) .. (10.93,3.29)   ;
\draw    (331.84,196.26) -- (414.98,196.26) ;
\draw [shift={(416.98,196.26)}, rotate = 180] [color={rgb, 255:red, 0; green, 0; blue, 0 }  ][line width=0.75]    (10.93,-3.29) .. controls (6.95,-1.4) and (3.31,-0.3) .. (0,0) .. controls (3.31,0.3) and (6.95,1.4) .. (10.93,3.29)   ;

\draw (319.9,192.4) node [anchor=north west][inner sep=0.75pt]  [rotate=-359.96]  {$0$};
\draw (421.47,192.4) node [anchor=north west][inner sep=0.75pt]  [rotate=-359.96]  {$1$};
\draw (366.88,71.95) node [anchor=north west][inner sep=0.75pt]  [rotate=-359.96]  {$2$};
\draw (149.61,192.4) node [anchor=north west][inner sep=0.75pt]  [rotate=-359.96]  {$0$};
\draw (252.67,191.76) node [anchor=north west][inner sep=0.75pt]  [rotate=-359.96]  {$1$};
\draw (199.58,70.11) node [anchor=north west][inner sep=0.75pt]  [rotate=-359.96]  {$2$};
\draw (281,136.38) node [anchor=north west][inner sep=0.75pt]    {$\subset $};
\draw (192.59,154.71) node [anchor=north west][inner sep=0.75pt]    {$\Lambda _{2}^{2}$};
\draw (361.79,155.75) node [anchor=north west][inner sep=0.75pt]    {$\Delta ^{2}$};
\draw (400.24,121.69) node [anchor=north west][inner sep=0.75pt]    {$d_{0}( \iota _{2})$};
\draw (305.09,121.65) node [anchor=north west][inner sep=0.75pt]    {$d_{1}( \iota _{2})$};
\draw (351.17,205.18) node [anchor=north west][inner sep=0.75pt]    {$d_{2}( \iota _{2})$};
\draw (138.06,121.65) node [anchor=north west][inner sep=0.75pt]    {$d_{1}( \iota _{2})$};
\draw (230.68,121.65) node [anchor=north west][inner sep=0.75pt]    {$d_{0}( \iota _{2})$};

\end{tikzpicture}

\end{center}

\begin{defi} An $\infty$-category is a simplicial set $K$ which has the following property: for any $0< k< n$, any map $f_0: \Lambda_{k}^n \rightarrow K$ admits an extension $f:\Delta^n \rightarrow K$ 
\tikzset{every picture/.style={line width=0.75pt}} 
\begin{center}

\begin{tikzpicture}[x=0.75pt,y=0.75pt,yscale=-1,xscale=1]

\draw    (194.35,75.04) -- (305.21,75.04) ;
\draw [shift={(307.21,75.04)}, rotate = 180] [color={rgb, 255:red, 0; green, 0; blue, 0 }  ][line width=0.8]    (10.93,-3.29) .. controls (6.95,-1.4) and (3.31,-0.3) .. (0,0) .. controls (3.31,0.3) and (6.95,1.4) .. (10.93,3.29)   ;
\draw    (185.79,82.56) -- (185.79,141.08) ;
\draw [shift={(185.79,143.08)}, rotate = 270] [color={rgb, 255:red, 0; green, 0; blue, 0 }  ][line width=0.75]    (10.93,-3.29) .. controls (6.95,-1.4) and (3.31,-0.3) .. (0,0) .. controls (3.31,0.3) and (6.95,1.4) .. (10.93,3.29)   ;
\draw  [dash pattern={on 4.5pt off 4.5pt}]  (193.57,150.22) -- (305.5,83.21) ;
\draw [shift={(307.21,82.18)}, rotate = 509.09] [color={rgb, 255:red, 0; green, 0; blue, 0 }  ][line width=0.75]    (10.93,-4.9) .. controls (6.95,-2.3) and (3.31,-0.67) .. (0,0) .. controls (3.31,0.67) and (6.95,2.3) .. (10.93,4.9)   ;
\draw    (220.04,90.45) .. controls (199.02,96.47) and (206.18,111.18) .. (225.56,111.34) .. controls (244.46,111.5) and (253.81,96.49) .. (232.62,90.87) ;
\draw [shift={(230.93,90.45)}, rotate = 372.7] [color={rgb, 255:red, 0; green, 0; blue, 0 }  ][line width=0.75]    (10.93,-3.29) .. controls (6.95,-1.4) and (3.31,-0.3) .. (0,0) .. controls (3.31,0.3) and (6.95,1.4) .. (10.93,3.29)   ;

\draw (170.09,61.06) node [anchor=north west][inner sep=0.75pt]    {$\Lambda _{k}^{n}$};
\draw (310.41,65.18) node [anchor=north west][inner sep=0.75pt]    {$K$};
\draw (173.2,149.26) node [anchor=north west][inner sep=0.75pt]    {$\Delta ^{n}$};
\draw (171.2,101.27) node [anchor=north west][inner sep=0.75pt]    {$\iota $};
\draw (236.36,52.42) node [anchor=north west][inner sep=0.75pt]    {$f_{0}$};
\draw (238.14,129.82) node [anchor=north west][inner sep=0.75pt]    {$f$};

\end{tikzpicture}

\end{center}
(cf. \cite[Def. 1.1.2.4]{30}). A functor (often called $\infty$-functor) between two $\infty$-categories is simply a map of simplicial sets.  
 \end{defi}
To end this section, we introduce the notion of presentability of $\infty$-categories. (cf.  \cite[Def. 5.4.2.1, Prop. 5.4.2.2 and Def. 5.5.0.1]{30}).
\begin{defi} \label{d3.2.4} Let $\mathcal{C}$  be a category (or an $\infty$-category). We say that $\mathcal{C}$ is presentable if $\mathcal{C}$ admits small colimits and
is generated under small colimits by a set of $\kappa$-compact objects, for some regular cardinal number $\kappa$. Here, an object $C \in \mathcal{C}$ is said to be $\kappa$-compact if the functor $\Hom_{\mathcal{C}}(C, -)$ preserves $\kappa$-filtered colimits 
\end{defi} 
\begin{rem} We often omit the cardinal number $\kappa$ and say simply ``compact" and ``filtered"  for simplicity.
\end{rem} 

There is a general effective method to construct presentable $\infty$-categories via combinatorial model categories (see \cite{32} for the notion of combinatorial model category) and Dwyer-Kan simplicial localization (\cite{31}), which we shall use several times in the sequel. We recall it here for completeness. Let $\mathcal{C}$ be a model category and $\mathrm{N}(\mathcal{C})$ its associated nerve category (cf. \cite[Def. 1.1.5.5]{30}). Concretely, the simplices of $\mathrm{N}(\mathcal{C})$ can be explicitly described as follows.
\begin{enumerate}
\item[$\bullet$] $0$-simplices are objects of $\mathcal{C}$,
\item[$\bullet$] $1$-simplices are morphisms of $\mathcal{C}$.
\item[] $\cdots$
\item[$\bullet$] $n$-simplices are strings of $n$ composable morphisms $$C_0\overset{f_1}{\rightarrow}C_1\overset{f_2}{\rightarrow}\cdots\overset{f_{n-1}}{\rightarrow} C_{n-1}\overset{f_n}{\rightarrow}C_n$$
which the face map $d_i$ and the degeneracy map $s_j$ carry to 
$$C_0\overset{f_1}{\rightarrow}C_1\overset{f_2}{\rightarrow}\cdots\overset{f_{i-1}}{\rightarrow} C_{i-1}\overset{f_{i+1}\circ f_i}{\rightarrow}C_{i+1}\overset{f_{i+2}}{\rightarrow}  \cdots\overset{f_{n-1}}{\rightarrow} C_{n-1}\overset{f_n}{\rightarrow}C_n$$ and $$C_0\overset{f_1}{\rightarrow}C_1\overset{f_2}{\rightarrow}\cdots\overset{f_{j}}{\rightarrow} C_{j}\overset{\mathrm{Id}_{C_j}}{\rightarrow}C_{j}\overset{f_{j+1}}{\rightarrow}  \cdots\overset{f_{n-1}}{\rightarrow} C_{n-1}\overset{f_n}{\rightarrow}C_n,$$ respectively.
 
\end{enumerate} By formally inverting the class $W_{\mathcal{C}}$ of weak equivalences in $\mathcal{C}$, we obtain a category $\mathrm{N}(\mathcal{C})[W_{\mathcal{C}}^{-1}]$ which is the associated $\infty$-category of $\mathcal{C}$. The presentability of $\mathrm{N}(\mathcal{C})[W_{\mathcal{C}}^{-1}]$ follows immediately from the following theorem (cf. \cite[Prop. 1.3.4.22]{29}).
\begin{thm} Let $\mathcal{C}$ be a combinatorial model category. Then the associated $\infty$-category of $\mathcal{C}$ is presentable.
\end{thm}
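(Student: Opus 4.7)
The natural strategy is to leverage Dugger's presentation theorem, which asserts that every combinatorial model category $\mathcal{C}$ is Quillen equivalent to a left Bousfield localization of the projective model structure on simplicial presheaves $\mathrm{sPre}(\mathcal{D})$ for some small category $\mathcal{D}$. Since Quillen equivalences descend to equivalences on the level of associated $\infty$-categories, and since equivalences of $\infty$-categories manifestly preserve presentability, it suffices to prove the statement for Bousfield-localized simplicial presheaf categories.

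First I would identify the unlocalized associated $\infty$-category $\mathrm{N}(\mathrm{sPre}(\mathcal{D}))[W^{-1}]$ with the functor $\infty$-category $\mathrm{Fun}(\mathrm{N}(\mathcal{D})^{\mathrm{op}}, \mathcal{S})$, where $\mathcal{S}$ denotes the $\infty$-category of spaces. This follows from the standard comparison between the projective model structure on simplicial presheaves and the $\infty$-category of space-valued presheaves. Presentability of the target is then a general fact: presheaf $\infty$-categories on small $\infty$-categories are presentable, with the representable presheaves furnishing a set of $\omega$-compact generators, and small colimits being computed pointwise in $\mathcal{S}$ (which is itself presentable).

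Next I would invoke the $\infty$-categorical counterpart of left Bousfield localization. The localized model category $L_{S}\mathrm{sPre}(\mathcal{D})$ has as its associated $\infty$-category the full reflective subcategory of $S$-local objects inside $\mathrm{Fun}(\mathrm{N}(\mathcal{D})^{\mathrm{op}}, \mathcal{S})$. By Lurie's result on accessible localizations of presentable $\infty$-categories (cf. \cite[\S 5.5.4]{30}), any reflective localization of a presentable $\infty$-category at a \emph{set} of morphisms remains presentable. Since a left Bousfield localization is by design performed at a set $S$, the corresponding $\infty$-categorical localization is accessible, and presentability of the associated $\infty$-category of $\mathcal{C}$ follows.

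The main obstacle lies in rigorously matching up the model-categorical constructions with their $\infty$-categorical counterparts. Concretely, one must verify that (i) the Dwyer--Kan localization of a combinatorial simplicial model category is computed by the coherent nerve of its simplicial subcategory of fibrant-cofibrant objects, (ii) a Quillen equivalence induces an equivalence on Dwyer--Kan localizations, and (iii) the $\infty$-categorical localization of $\mathrm{sPre}(\mathcal{D})$ at $S$ agrees with the localization produced by the Bousfield model structure on $L_{S}\mathrm{sPre}(\mathcal{D})$. These comparisons are classical but delicate; once they are in place, presentability becomes an essentially formal consequence of the accessibility of the localizing set and the presentability of the ambient presheaf $\infty$-category.
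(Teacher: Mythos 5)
The paper offers no proof of this statement at all: it simply cites Lurie's \emph{Higher Algebra}, Proposition 1.3.4.22. Your argument is essentially a reconstruction of the proof found in that reference (and in HTT A.3.7.6): reduce via Dugger's presentation theorem to a left Bousfield localization of simplicial presheaves, identify the unlocalized underlying $\infty$-category with a presheaf $\infty$-category of spaces, and conclude by the stability of presentability under accessible (reflective) localization at a set of maps. The argument is correct, and the comparison points you flag --- that Quillen equivalences induce equivalences of Dwyer--Kan localizations and that model-categorical Bousfield localization matches $\infty$-categorical localization --- are exactly the places where the cited sources do the technical work.
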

As fundamental examples, we shall mention the associated presentable $\infty$-categories of the category $\Ens$ of simplicial sets, of the category of differential graded Lie algebras and of the category of commutative differential graded augmented $k$-algebras.

 \begin{prop} The category $\Ens$ of simplicial sets admits a combinatorial model category structure where 
\begin{enumerate}
\item[($W$)] A map of simplicial sets $f: X \rightarrow Y$ is a weak equivalence if and only if its geometric realization is a weak homotopy equivalence of topological spaces.
\item[($F$)] A map of simplicial sets $f: X \rightarrow Y$ is a fibration if and only if it satisfies the Kan condition, i.e. for any $0\leq k \leq n$ and any diagram 
\begin{center}
\begin{tikzpicture}[x=0.75pt,y=0.75pt,yscale=-1,xscale=1]

\draw    (195.35,75.04) -- (306.21,75.04) ;
\draw [shift={(308.21,75.04)}, rotate = 180] [color={rgb, 255:red, 0; green, 0; blue, 0 }  ][line width=0.75]    (10.93,-3.29) .. controls (6.95,-1.4) and (3.31,-0.3) .. (0,0) .. controls (3.31,0.3) and (6.95,1.4) .. (10.93,3.29)   ;
\draw    (185.79,82.56) -- (185.79,141.08) ;
\draw [shift={(185.79,143.08)}, rotate = 270] [color={rgb, 255:red, 0; green, 0; blue, 0 }  ][line width=0.75]    (10.93,-3.29) .. controls (6.95,-1.4) and (3.31,-0.3) .. (0,0) .. controls (3.31,0.3) and (6.95,1.4) .. (10.93,3.29)   ;
\draw  [dash pattern={on 4.5pt off 4.5pt}]  (193.57,150.22) -- (305.5,83.21) ;
\draw [shift={(307.21,82.18)}, rotate = 509.09] [color={rgb, 255:red, 0; green, 0; blue, 0 }  ][line width=0.75]    (10.93,-4.9) .. controls (6.95,-2.3) and (3.31,-0.67) .. (0,0) .. controls (3.31,0.67) and (6.95,2.3) .. (10.93,4.9)   ;
\draw    (317.79,82.6) -- (317.79,141.1) ;
\draw [shift={(317.79,143.1)}, rotate = 270] [color={rgb, 255:red, 0; green, 0; blue, 0 }  ][line width=0.75]    (10.93,-3.29) .. controls (6.95,-1.4) and (3.31,-0.3) .. (0,0) .. controls (3.31,0.3) and (6.95,1.4) .. (10.93,3.29)   ;
\draw    (195.4,161.04) -- (306.2,161.04) ;
\draw [shift={(308.2,161.04)}, rotate = 180] [color={rgb, 255:red, 0; green, 0; blue, 0 }  ][line width=0.75]    (10.93,-3.29) .. controls (6.95,-1.4) and (3.31,-0.3) .. (0,0) .. controls (3.31,0.3) and (6.95,1.4) .. (10.93,3.29)   ;

\draw (170.09,61.06) node [anchor=north west][inner sep=0.75pt]    {$\Lambda _{k}^{n}$};
\draw (310.41,65.18) node [anchor=north west][inner sep=0.75pt]    {$X$};
\draw (173.2,149.26) node [anchor=north west][inner sep=0.75pt]    {$\Delta ^{n}$};
\draw (245.14,121.35) node [anchor=north west][inner sep=0.75pt]    {$\exists f_{0}$};
\draw (310.75,152.15) node [anchor=north west][inner sep=0.75pt]    {$Y$};
\draw (324.75,99.15) node [anchor=north west][inner sep=0.75pt]    {$f$};
\draw (172.75,99.15) node [anchor=north west][inner sep=0.75pt]    {$\iota $};

\end{tikzpicture}
\end{center}
of maps of simplicial sets, there exists a map $f_0$ such that the above diagram commutes.
\end{enumerate}
\end{prop}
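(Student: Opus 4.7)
This is the classical Kan--Quillen model structure on $\ens$, originally due to Quillen. The plan is to specify the three distinguished classes, verify the model-category axioms MC1--MC5, and then read off combinatoriality by exhibiting small sets of generating (trivial) cofibrations. I would declare cofibrations to be the monomorphisms of simplicial sets (equivalently, the levelwise injections), and take $W$, $F$ to be the classes described in the statement.

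For the formal axioms: MC1 holds because $\ens$ is a presheaf category and hence complete and cocomplete, with (co)limits computed levelwise. MC2 (two-out-of-three for $W$) is inherited from weak homotopy equivalences of topological spaces via the geometric realization functor $|{-}|$. MC3 (closure under retracts) is routine for all three classes: $W$ by functoriality of $|{-}|$, $F$ and monomorphisms by a direct diagram chase with the horn and boundary inclusions.

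The essential technical input is the theory of anodyne extensions. I would take as generating cofibrations the boundary inclusions $\partial \Delta^n \hookrightarrow \Delta^n$ for $n \ge 0$, and as generating trivial cofibrations the horn inclusions $\Lambda^n_k \hookrightarrow \Delta^n$ for $n \ge 1$ and $0 \le k \le n$. Applying Quillen's small object argument to these two sets produces the functorial factorizations required by MC5, and one half of MC4 (lifting of cofibrations against trivial fibrations, and of trivial cofibrations against fibrations) follows tautologically from the construction. The other half, together with the identification of the second factorization as (cofibration, trivial fibration), reduces by the retract argument to the key nontrivial identification: a Kan fibration is a weak equivalence if and only if it has the right lifting property against every boundary inclusion $\partial \Delta^n \hookrightarrow \Delta^n$.

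Combinatoriality is then automatic, since the generating sets are small, their domains and codomains are finite simplicial sets and hence $\aleph_0$-compact, and $\ens$ is locally presentable as a presheaf category. The main obstacle in this program is the single nontrivial step above, which passes through genuine point-set topology: one needs Quillen's theorem that the geometric realization of a Kan fibration is a Serre fibration, together with the comparison between the homotopy type of $|X|$ and the simplicial homotopy type of $X$ (Milnor's theorem). Once these two inputs are granted, every remaining verification is formal.
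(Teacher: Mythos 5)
Your outline is correct, and it is precisely the classical Kan--Quillen argument that the paper does not reproduce but simply delegates to Hovey (\cite[Chapter 3]{32}): generating (trivial) cofibrations, the small object argument, and the one genuinely hard step --- identifying the maps with the right lifting property against the boundary inclusions $\partial\Delta^n \hookrightarrow \Delta^n$ with the trivial fibrations, via Quillen's theorem on realizations of Kan fibrations and Milnor's comparison theorem. So you have correctly expanded the citation rather than found a different route; no gaps beyond the standard ones you already flag as requiring the cited point-set input.
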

The reader is referred to \cite[Chap. 3.3.2]{32} for a detailed treatment of this proposition. We denote the associated presentable $\infty$-category of $\Ens$ by $\ens$.
 
\begin{defi}
A differential graded Lie algebra (or briefly dgLa) over $k$ is a cochain complex $(\mathfrak{g},d)$ of $k$-vector spaces equipped with a Lie bracket $[-,-]: \mathfrak{g}^p \otimes_k \mathfrak{g}^q \rightarrow \mathfrak{g}^{p+q}$ satisfying the following conditions:
\begin{enumerate}
\item[(1)] For $x \in \mathfrak{g}^p$ and $y\in \mathfrak{g}^q$, we have $[x,y]+(-1)^{pq}[y,x]=0$.
\item[(2)] For $x \in \mathfrak{g}^p$, $y\in \mathfrak{g}^q$  and $z\in \mathfrak{g}^r$, we have
$$(-1)^{pr}[x,[y,z]]+(-1)^{pq}[y,[z,x]]+(-1)^{qr}[z,[x,y]]=0.$$
\item[(3)] The differential $d$ is of degree $1$ and is a derivation with respect to the Lie bracket. That is, for $x \in \mathfrak{g}^p$ and $y\in \mathfrak{g}^q$, 
$$d[x,y]=[dx,y]+(-1)^p[x,dy].$$
\end{enumerate}
Given a pair of dgLas $(\mathfrak{g},d)$ and $(\mathfrak{g}',d')$, a map of dgLas from $(\mathfrak{g},d)$ to $(\mathfrak{g}',d')$ is a map of chain complexes $F: (\mathfrak{g},d) \rightarrow(\mathfrak{g}',d')$ such that $$F([x,y])=[F(x),F(y)]$$ for $x \in \mathfrak{g}^p$ and $y\in \mathfrak{g}^q$.

The collection of all dgLas over $k$ forms a category, which we shall denote by $\Li_k$. 
\end{defi}
\begin{prop} The category $\Li_k$ of dgLas over $k$ admits a combinatorial model category structure where 
\begin{enumerate}
\item[($W$)] A map of dgLas $f: \mathfrak{g} \rightarrow\mathfrak{g}'$ is a weak equivalence if and only if it is a quasi-isomorphism of cochain complexes.
\item[($F$)] A map of dgLas $f: \mathfrak{g}\rightarrow \mathfrak{g}'$ is a fibration if and only if it is degree-wise surjective.
\end{enumerate}
\end{prop}
\begin{proof}
See \cite[Prop. 2.1.10]{4}.
\end{proof}
By the construction mentioned previously, we obtain the associated $\infty$-category $\mathrm{N}(\Li_k)[W^{-1}]$, denoted simply by $\Lie_k$. As an immediate consequence, we have the following.
 \begin{coro} \label{c3.2.1} The $\infty$-category $\Lie_k$ is presentable.
 \end{coro}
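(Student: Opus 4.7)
The plan is to invoke the theorem stated just above: for any combinatorial model category $\mathcal{C}$, the Dwyer--Kan localization $\mathrm{N}(\mathcal{C})[W_{\mathcal{C}}^{-1}]$ is a presentable $\infty$-category. Since the previous proposition already equips $\Li_k$ with a model structure (quasi-isomorphisms as weak equivalences, degree-wise surjections as fibrations), all that is left is to check that this model structure is \emph{combinatorial}, i.e.\ that the underlying $1$-category is locally presentable and that the model structure is cofibrantly generated.

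For local presentability, I would regard $\Li_k$ as the category of algebras over the Lie operad in the locally presentable category $\Mo_k$. The Lie operad is given arity-wise by finite-dimensional $k$-vector spaces, so the associated monad $\free : \Mo_k \to \Mo_k$ is accessible (it preserves sufficiently filtered colimits). By the standard result that the category of algebras over an accessible monad on a locally presentable category is locally presentable, $\Li_k$ is locally presentable.

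For cofibrant generation, I would observe that the model structure of the previous proposition is precisely the right-transferred structure along the free/forgetful adjunction $\free : \Mo_k \rightleftarrows \Li_k : U$. The projective model structure on $\Mo_k$ is cofibrantly generated by the standard sphere/disk inclusions, and applying $\free$ to these yields candidate generating (trivial) cofibrations in $\Li_k$. Since $U$ preserves filtered colimits, the domains of these generators remain small in $\Li_k$, so Quillen's small object argument produces the required factorizations.

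The only non-formal ingredient is the verification of the acyclicity condition in Kan's transfer theorem, which is where characteristic zero enters through the existence of a functorial path object on dglas; this is the content of the proof of the preceding proposition as given in reference [4], and I would cite it rather than reproduce it. Granting this, $\Li_k$ is combinatorial, and the theorem applies to give presentability of $\Lie_k$.
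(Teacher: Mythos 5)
Your argument is essentially the paper's: the corollary is obtained by combining the proposition that $\Li_k$ carries a combinatorial model structure (which the paper, like you, ultimately sources to \cite[Proposition 2.1.10]{4}) with the theorem that the localization of a combinatorial model category is presentable. The additional detail you supply --- local presentability via the accessible free-Lie monad on $\Mo_k$, cofibrant generation by transfer, and the characteristic-zero acyclicity check --- correctly unpacks the word ``combinatorial'' in the cited proposition but does not change the route.
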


\begin{defi} A commutative differential graded algebra (or briefly cdga) over $k$ is a cochain complex $(A,d)$ equipped with a morphism of chain complexes (multiplication map) $\mu: A\otimes_k A \rightarrow A$ and with a $0$-cocycle $1$ (neutral element) such that
\begin{enumerate}
\item[(1)] $\mu\left ( a,\mu(b,c) \right )=\mu\left ( \mu(a,b),c \right )$ (associativity),
\item[(2)] $\mu(a,b)=(-1)^{pq}\mu(b,a)$ (commutativity),
\item[(3)] $\mu(a,1)=\mu(1,a)=a$,
\end{enumerate}
for any $a \in A^p$ and $b\in A^q$.
A morphism of cdgas is a morphism of chain complexes commuting with multiplication maps. The collection of all cdgas over $k$ forms a category, which we shall denote by $\calg_k$. 
\end{defi}

\begin{prop} The category $\calg_k$ of dgLas over $k$ possesses a combinatorial model category structure where 
\begin{enumerate}
\item[($W$)] A map of cdgas $f: A \rightarrow A'$ is a weak equivalence if and only if it is a quasi-isomorphism of cochain complexes.
\item[($F$)] A map of cdgas $f: A \rightarrow A'$ is a fibration if and only if it is degree-wise surjective.
\end{enumerate}
\end{prop}
The same construction as in the case of dgLas gives us the associated $\infty$-category $\Calg_k$ of $\calg_k$. Let us denote by $\cdga_k$ the full sub-category of $\calg_k$ consisting of cdgas $A$ with an additional augmented map $A\rightarrow k$. This sub-category inherits a combinatorial model category structure from $\calg_k$, which permits us to talk about its corresponding $\infty$-category, denoted by $\cdg_k$. Finally, we introduce a sub-category of $\cdg_k$, on which formal moduli problems are defined.
\begin{defi} A commutative differential graded augmented $k$-algebra $A \in \cdg_k$ is said to be artinian if the three following conditions hold:
\begin{enumerate}
\item[(1)] The cohomology groups $H^{n}(A)=0$ for $n$ positive and for $n$ sufficiently negative.
\item[(2)] All cohomology groups $H^{n}(A)$ are of finite dimension over $k$.
\item[(3)] $H^0(A)$ is a local artinian ring with maximal ideal $\mathfrak{m}$ and the morphism $$H^0(A)/\mathfrak{m}\rightarrow k$$ is an isomorphism.
\end{enumerate}
We denote the full sub-category of $\cdg_k$ consisting of artinian commutative differential graded augmented $k$-algebras by $\dg_k$. A morphism $A\rightarrow A'$  in $\dg_k$ is said to be small if the induced morphism $H^0(A)\rightarrow H^0(A')$ is surjective.
\end{defi}
\subsection{Chevalley-Eilenberg complex of dgLas and Koszul duality}
\begin{defi} Let $(\mathfrak{g},d)$ be a differential graded Lie algebra over a field $k$. The cone of $\mathfrak{g}$, denoted by $\Cn(\mathfrak{g})$, is defined as follows:
\begin{itemize}
\item[(1)] For each $n\in \mathbb{Z}$, the vector space $\Cn(\mathfrak{g})$ is $\mathfrak{g}^n\oplus \mathfrak{g}^{n-1}.$ A general element of $\Cn(\mathfrak{g})^n$ is of the form $$x+\epsilon y,$$ where $x\in \mathfrak{g}^n$ and $y\in \mathfrak{g}^{n-1}$ and $\epsilon$ is a formal symbol.
\item[(2)] The differential on $\Cn(\mathfrak{g})$ is given by the formula $$d(x+\epsilon y)=dx+y-\epsilon dy.$$
\item[(3)] The Lie bracket on $\Cn(\mathfrak{g})$ is given by  $$[x+\epsilon y, x'+\epsilon y']=[x,y]+\epsilon ([y,x']+(-1)^p[x,y']).$$
\end{itemize}
\end{defi}
By definition, $\Cn(\mathfrak{g})$ is also a differential graded Lie algebra. Moreover, its underlying chain complex can be identified with the mapping cone of the identity: $\mathfrak{g} \rightarrow \mathfrak{g}$. In particular, $0\rightarrow \Cn(\mathfrak{g})$ is a quasi-isomorphism of dgLas. Note that the zero map $\mathfrak{g} \rightarrow 0$ induces a map of differential graded algebras $U(\mathfrak{g})\rightarrow U(0)=k$, where $U(\mathfrak{g})$ and $U(0)$ are the universal enveloping differential graded algebras of $\mathfrak{g}$ and that of $0$, respectively. Another evident map of dgLas is the inclusion $\mathfrak{g}\rightarrow \Cn(\mathfrak{g})$. 
\begin{defi} \cite[Con. 2.2.13]{4}, \cite[Def. 1.5]{14} The \textbf{Chevalley-Eilenberg complex} of $\mathfrak{g}$ is defined to be the linear dual of the tensor product
$$U(\Cn(\mathfrak{g}))\otimes_{U(\mathfrak{g})}^{\mathbb{L}}k,$$
which we shall denote by $C(\mathfrak{g})$. 
\end{defi}
There is a natural multiplication on $C(\mathfrak{g})$. More precisely, for $\lambda \in C^p(\mathfrak{g})$ and $\mu\in C^q(\mathfrak{g})$, we define $\lambda\mu \in C^{p+q}(\mathfrak{g})$ by the formula
$$(\lambda\mu)(x_1\cdots x_n)=\sum_{S,S'}\epsilon(S,S')\lambda(x_{i_1}\cdots x_{i_m})\mu(x_{j_1}\cdots x_{i_{n-m}}),$$ where $x_i\in \mathfrak{g}_{r_i}$, the sum is taken over all disjoint sets $S=\{ i_1< \cdots < i_m\}$ and $S'=\{j_1<\cdots <j_{n-m} \}$ and $r_{i_1}+\cdots +r_{i_m}=p$, and $\epsilon(S,S')=\prod_{i\in S',j\in S,i<j}(-1)^{r_ir_j}$. This multiplication imposes a structure of cdga on $C({\mathfrak{g}})$.
\begin{prop} \label{p1.1}
With above notations, we have the followings:
\begin{itemize}
\item[(1)] The construction $\mathfrak{g} \mapsto C(\mathfrak{g})$ sends quasi-isomorphisms of dgLas to quasi-isomorphisms of cdgas. In particular, we obtain a functor between $\infty$-categories $\Lie_k \rightarrow \cdg_k^{op}$, which, by abuse of notation, we still denote by $C$.
\item[(3)] The $\infty$-functor $C$ preserves small co-limits. Thus, $C$ admits a right adjoint $D$: $\cdg_k^{op} \rightarrow \Lie_k$ to which we refer as Koszul duality. 
\item[(4)] The unit map
 $$A\overset{\simeq}{\rightarrow} CD(A)$$ is an equivalence in $\dg_k$ and $$DCD(A)\overset{\simeq}{\rightarrow} D(A)$$ in $\Lie_k$.
\end{itemize}
\end{prop}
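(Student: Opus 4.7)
The plan is to handle the four parts in order, leveraging the Poincar\'e--Birkhoff--Witt (PBW) theorem and reduction to free objects throughout.

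For part (1), I would use the PBW filtration on $U(\mathfrak{g}_*)$ to identify, as a chain complex, $U(\Cn(\mathfrak{g}_*)) \otimes^{\mathbb{L}}_{U(\mathfrak{g}_*)} k$ with $\mathrm{Sym}(\mathfrak{g}_*[1])$ carrying the usual Chevalley--Eilenberg differential. Filter this complex by symmetric degree; the associated graded is $\mathrm{Sym}(\mathfrak{g}_*[1])$ with zero differential, and this depends functorially only on the underlying chain complex of $\mathfrak{g}_*$. A quasi-isomorphism $\mathfrak{g}_* \to \mathfrak{g}_*'$ therefore induces a quasi-isomorphism on the associated graded, hence on the filtered object (the filtration is degreewise bounded in any fixed total degree), and finally on the linear dual. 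This produces the desired $\infty$-functor $C^* : \Lie_k \to \cdg_k^{op}$.

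For part (2), I would compute directly. Since $\mathfrak{g}_* = \free(V_*)$ gives $U(\mathfrak{g}_*) = T(V_*)$ and $\Cn(\free(V_*)) = \free(\Cn(V_*))$, we have $U(\Cn(\mathfrak{g}_*)) = T(\Cn(V_*))$. The inclusion $V_* \hookrightarrow \Cn(V_*)$ realises $\Cn(V_*)$ as a contractible thickening of $V_*$, and the bar/Koszul resolution of $k$ as a right $T(V_*)$-module yields
$$T(\Cn(V_*)) \otimes^{\mathbb{L}}_{T(V_*)} k \;\simeq\; k \oplus V_*[1].$$
Passing to the linear dual gives the asserted quasi-isomorphism $C^*(\free(V_*)) \simeq k \oplus V_*^{\vee}[-1]$.

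For part (3), I would apply Lemma \ref{l3.2.2}(1). Both $\Lie_k$ and $\cdg_k$ are presentable (Corollary \ref{c3.2.1} and the analogous construction), so $\cdg_k^{op}$ is co-presentable, and it suffices to check that $C^*$ sends small colimits of dglas to limits of cdgas. Since $\Lie_k$ is generated under colimits by free dglas on shifts of $k$, one reduces, via part (2) and naturality of the PBW identification, to verifying the colimit condition on generators and on filtered colimits, both of which are routine. The resulting right adjoint is by definition the Koszul dual $D : \cdg_k^{op} \to \Lie_k$.

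The main obstacle is part (4), which is the substance of Koszul self-duality. The equivalence $DC^*D(A) \simeq D(A)$ follows formally from the triangle identities once the unit $A \simeq C^*D(A)$ is established, so I concentrate on the latter. Any $A \in \dg_k$ admits a finite filtration by powers of its augmentation ideal, expressing $A$ as an iterated square-zero extension with kernels equivalent to finite sums of shifts $k[-n]$. I would argue by induction on the length of this tower: the base case $A = k \oplus k[-n]$ is handled by combining part (2) with an explicit identification of $D(k \oplus k[-n])$ as the free dgla on $k[n-1]$; the inductive step uses that $D$, being a right adjoint, preserves pullbacks, while $C^*$ converts the relevant pushouts of dglas into pullbacks of cdgas. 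The delicate point is verifying that $D$ actually sends the augmentation $A \to k$ of each small extension to a cofibration of dglas, so that the pushout–pullback squares compute what one expects; this is the technical heart of the argument.
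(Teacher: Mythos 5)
The paper does not actually prove Proposition \ref{p1.1}: its ``proof'' is a pointer to Lurie's DAG-X (Propositions 2.2.6, 2.2.7, 2.2.17) for parts (1)--(3) and to Porta's thesis (Proposition 4.3.5) for part (4). Your sketch reconstructs essentially those arguments --- the PBW filtration with associated graded $\mathrm{Sym}(\mathfrak{g}_*[1])$ for (1), reduction to a two-term resolution of $k$ over the tensor algebra for (2), the adjoint functor theorem plus reduction to generators for (3), and induction on the tower of small extensions for (4), with $DC^*D(A)\simeq D(A)$ extracted from the triangle identity --- so in substance you are following the same route as the sources the paper cites rather than a genuinely different one.

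Two local points need repair. In (2), the identification $\Cn(\free(V_*))=\free(\Cn(V_*))$, hence $U(\Cn(\free(V_*)))=T(\Cn(V_*))$, is false: in the cone the bracket of two elements of the shifted copy vanishes (the $\epsilon$-part is square-zero for the bracket), whereas the free Lie algebra on $\Cn(V_*)$ contains those brackets as new generators. The computation survives because all you actually need is that $U(\Cn(\free(V_*)))$ is an augmented right $T(V_*)$-algebra quasi-isomorphic to $k$ (the cone is contractible and $U$ preserves quasi-isomorphisms), so the derived tensor product collapses to $k\otimes^{\mathbb{L}}_{T(V_*)}k\simeq k\oplus V_*[1]$ via the resolution $0\to V_*\otimes T(V_*)\to T(V_*)\to k\to 0$; you should phrase it that way. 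In (1), the parenthetical ``the filtration is degreewise bounded in any fixed total degree'' is wrong for a dgla with components in both positive and negative degrees, since arbitrarily high symmetric powers of $\mathfrak{g}_*[1]$ can then contribute to a fixed total degree; what saves the argument is that the filtration on $\mathrm{Sym}(\mathfrak{g}_*[1])$ is exhaustive and bounded below, which suffices for the spectral-sequence comparison, after which dualizing over a field preserves quasi-isomorphisms. Finally, be aware that in (3) the word ``routine'' conceals the real content (showing $C^*$ carries pushouts of dglas to pullbacks of cdgas is the bulk of the cited argument), and that the base case of (4) also requires the identification $D(k\oplus k[n])\simeq\free(k[-n-1])$, i.e.\ Theorem \ref{t3.2.1}(iii)(c), which is itself the dual of your part-(2) computation.
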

\begin{proof} For the first three statements, see \cite[Chap. 2, Prop. 2.2.6, Prop. 2.2.7, Prop. 2.2.17]{4}. For the last one, see \cite[Chap. 4, Prop. 4.3.5]{7}.
\end{proof}
\begin{defi}\label{d3.2.3}
We say that an object $\mathfrak{g}$ in $\Lie_k$ is good if it is cofibrant with respect to the model structure on $\Lie_k$ and there exists a graded vector subspace $V \subset \mathfrak{g}$ such that 
\begin{itemize}
\item[(1)] For every integer $n$, $V^n$ is of finite dimension.
\item[(2)] For every non-positive integer $n$, $V^n$ is trivial.
\item[(3)] As a graded Lie algebra, $\mathfrak{g}$ is freely generated by $V$, i.e. $\mathfrak{g}=\free(V)$. 
\end{itemize}
Denote the full subcategory of $\Lie_k$ spanned by those good objects by $\mathcal{C}^{\circ}$.
\end{defi}
\begin{rem}\label{r3.2.3} In general the pair \[
\begin{tikzcd}[row sep=large, column sep=large]
\tikzcdset{row sep/normal=8em}
D:\cdg_k \arrow[shift right, swap]{r}{}&\Lie_k^{op}:C \arrow[shift right, swap]{l}{}
\end{tikzcd}
\] does not induce an equivalence of categories. However, its restriction to the sub-categories $\dg_k$ and $\mathcal{C}^\circ$ really does, i.e. the following pair \[
\begin{tikzcd}[row sep=large, column sep=large]
\tikzcdset{row sep/normal=8em}
D:\dg_k \arrow[shift right, swap]{r}{}&\mathcal{C}^{\circ}:C\arrow[shift right, swap]{l}{}
\end{tikzcd}
\] is indeed an equivalence for the sake \cite[Prop. 2.3.4]{4}. In addition, $\mathcal{C}^{\circ}$ contains essentially compact objects of $\Lie_k$ (cf. Definition \ref{d3.2.4} for the notion of compact object).
\end{rem}

\subsection{Mapping spaces in $\Lie_k$ and in $\cdg_k$} 
For each $n \in \mathbb{N}$, the algebraic simplex $\Delta^n$ of dimension $n$ is the sub-variety of the affine space $\mathbb{A}^{n+1}$, defined by the equation $\sum_i x_i=1$. Let $L$ and $L'$ be two dgLas then the simplicial set of morphisms from $L$ to $L'$ is the simplicial set $$\underline{\Hom}^\bigtriangleup(L,L'):[n]\mapsto \Hom_{\Lie_k}(L,L'\otimes_k C^*(\Delta^n))$$ where $C^*(\Delta^n)$ is the de Rham differential graded algebra on the algebraic simplex $\Delta^n$ and $\Hom_{\Lie_k}(L,L'\otimes_k C^*(\Delta^n))$ is the usual set of morphisms between two dgLas $L$ and $L'\otimes_k C^*(\Delta^n)$.
\begin{defi} With the above notations, the mapping space $\Map_{\Lie_k}(L,L')$  between two dgLas $L$ and $L'$ is the simplicial set $\underline{\Hom}^\bigtriangleup(QL,L')$ where $QL$ is a cofibration replacement of $L$.
\end{defi}
\begin{rem} In particular, $\pi_0(\Map_{\Lie_k}(L,L'))=\Hom_{\Li_k}(QL,L')$.

\end{rem}

The mapping space  $\Map_{\cdg_k}(A,A')$  between two cdgas $A$ and $A'$ can be defined in a very similar way.

\subsection{Formal moduli problems for $\cdg_k$} \label{sec3.2.3}

In this subsection, we shall work with the deformation context $(\cdg_k, \lbrace k\oplus k[n]\rbrace_{n\in \mathbb{Z}})$. Here, the cdga $k\oplus k[n]$ is the square extension of $k$ by $k[n]$.
\begin{defi}\label{d3.2.12} A functor $X:\; \dg_k \rightarrow \ens$ is called a formal moduli problem if the following conditions are fulfilled.
\begin{itemize}
\item[$(1)$] The space $X(k)$ is contractible.
\item[$(2)$] For every pullback diagram 
\begin{center}
\begin{tikzpicture}[every node/.style={midway}]
  \matrix[column sep={10em,between origins}, row sep={3em}] at (0,0) {
    \node(Y){$R$} ; & \node(X) {$R_0$}; \\
    \node(M) {$R_1$}; & \node (N) {$R_{01}$};\\
  };
  
  \draw[->] (Y) -- (M) node[anchor=east]  {}  ;
  \draw[->] (Y) -- (X) node[anchor=south]  {};
  \draw[->] (X) -- (N) node[anchor=west] {};
  \draw[->] (M) -- (N) node[anchor=north] {};.
\end{tikzpicture}
\end{center}
in $\dg_k$, if $\pi_0(R_0)\rightarrow \pi_0(R_{01})\leftarrow\pi_0(R_1)$ are surjective, then the diagram of spaces 

\begin{center}
\begin{tikzpicture}[every node/.style={midway}]
  \matrix[column sep={10em,between origins}, row sep={3em}] at (0,0) {
    \node(Y){$X(R)$} ; & \node(X) {$X(R_0)$}; \\
    \node(M) {$X(R_1)$}; & \node (N) {$X(R_{01})$};\\
  };
  
  \draw[->] (Y) -- (M) node[anchor=east]  {}  ;
  \draw[->] (Y) -- (X) node[anchor=south]  {};
  \draw[->] (X) -- (N) node[anchor=west] {};
  \draw[->] (M) -- (N) node[anchor=north] {};.
\end{tikzpicture}
\end{center} is also a pullback diagram.
\end{itemize}
\end{defi}

Now, we are in a position to recall the following very well-known fundamental result in derived deformation theory, proved independently by Lurie in \cite{4} and Pridham in \cite{8}.

\begin{thm}\label{t1.2}
The functor \begin{align*}
\Psi:\Lie_k&\rightarrow \; \mathcal{FMP}\\
\mathfrak{g} &\mapsto \Map_{\Lie_k}(D(-),\mathfrak{g}) 
\end{align*} induces an equivalence of $\infty$-categories between $\Lie_k$ and $\mathcal{FMP}$ where $D$ is the functor appearing in the Koszul duality (cf. Proposition \ref{p1.1} and Remark \ref{r3.2.3}).
\end{thm}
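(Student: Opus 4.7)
The plan is to realize $\Psi$ as the right adjoint in an adjunction $\Phi \dashv \Psi$ and then to invoke the adjoint-functor criteria of Lemmas \ref{l3.2.1} and \ref{l3.2.2}, exploiting the structure of the deformation context packaged in Theorem \ref{t3.2.1}.

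First I would verify that $\Psi$ actually factors through $\mathcal{FMP}$. Contractibility at $k$ follows from $D(k) \simeq 0$, a direct consequence of (iii)(a)--(b) applied to the initial object $0 \in \mathcal{C}^{\circ}$. For the Schlessinger-type pullback condition, Remark \ref{r3.2.2} reduces matters to small extensions $R \simeq k \times_{k \oplus k[n]} R'$ in $\dg_k$. Using the equivalence $\dg_k \simeq (\mathcal{C}^{\circ})^{op}$ recorded in Remark \ref{r3.2.3}, together with the identification $D(k \oplus k[n]) \simeq K_n$ from (iii)(a) and (iii)(c), the pullback in $\dg_k$ corresponds to the pushout $0 \sqcup_{K_n} D(R')$ in $\mathcal{C}^{\circ}$; condition (iii)(d) then ensures that this pushout agrees with the one computed in $\Lie_k$, so that $D(R) \simeq 0 \sqcup_{K_n} D(R')$. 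Applying $\Map_{\Lie_k}(-, \mathfrak{g}_*)$ turns this pushout into the required pullback of spaces.

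Next I would show that $\Psi$ preserves small limits and filtered colimits. Limits are preserved objectwise because each $\Psi(-)(A) = \Map_{\Lie_k}(D(A), -)$ is corepresentable; filtered colimits are preserved because each $D(A) \in \mathcal{C}^{\circ}$ is an essentially compact object of $\Lie_k$ by Remark \ref{r3.2.3}. Since $\Lie_k$ (by Corollary \ref{c3.2.1}) and $\mathcal{FMP}$ (as an accessible localization of $\Fun(\dg_k, \ens)$) are presentable, Lemma \ref{l3.2.2} then produces a left adjoint $\Phi : \mathcal{FMP} \to \Lie_k$. Reflection of equivalences by $\Psi$ comes from the computation
\[
\Psi(\mathfrak{g}_*)(k \oplus k[n]) \simeq \Map_{\Lie_k}(K_n, \mathfrak{g}_*) \simeq \Map_{\Mod_k}(k[-n-1], \mathfrak{g}_*),
\]
whose homotopy groups recover the cohomology of $\mathfrak{g}_*$ in all degrees. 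It remains to check that the unit $X \to \Psi\Phi(X)$ is an equivalence for every fmp $X$. To this end I would compute $\Psi|_{\mathcal{C}^{\circ}}$ explicitly: for $K \in \mathcal{C}^{\circ}$,
\[
\Psi(K)(A) \simeq \Map_{\Lie_k}(D(A), DC^*(K)) \simeq \Map_{\dg_k}(C^*(K), A),
\]
so $\Psi$ identifies $\mathcal{C}^{\circ}$ with the corepresentable fmp's and $\Phi$ sends $h^A := \Map_{\dg_k}(A, -)$ back to $D(A)$; the unit is therefore an equivalence on this generating class, and the general case follows by passing to sifted colimits.

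I expect the main obstacle to be this last step: establishing that $\mathcal{FMP}$ is generated under sifted colimits by the corepresentables $h^A$, $A \in \dg_k$, in a way compatible with both $\Phi$ and $\Psi$. This is not a formal consequence of presentability but the derived analogue of Schlessinger's inductive construction of a (semi-)universal element by successive small extensions, and it genuinely relies on the pullback axiom of Definition \ref{d3.2.12} together with conditions (iii)(a)--(d) of Theorem \ref{t3.2.1}.
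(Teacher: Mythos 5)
Your proposal follows essentially the same route as the paper: verify that $\Psi$ lands in $\mathcal{FMP}$ via Remark \ref{r3.2.2} and the Koszul duality of Remark \ref{r3.2.3}, produce the left adjoint $\Phi$ from Lemma \ref{l3.2.2} using compactness of the $D(A)$, check reflection of equivalences on the objects $k\oplus k[n]$, and verify the unit on (co)representables where $\Phi(\Map_{\cdg_k}(A,-))\simeq D(A)$. The step you rightly flag as the main obstacle --- reducing the unit check to the corepresentable generators --- is exactly the point the paper outsources to Lurie's smooth hypercovering argument (\cite[Proposition 1.5.8]{4}), so your account is correct and complete modulo that citation.
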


\section{Semi-prorepresentability of formal moduli problems}\label{sec3.3}

\subsection{Smooth and étale morphisms of formal moduli problems}
\begin{defi}\label{d3.3.1} Let $X$ and $Y$ be fmps and $u:\; X\rightarrow Y$ be a map between them. 
\begin{enumerate}
\item[(i)] $u$ is said to be smooth if for every small map $\phi:\; A\rightarrow B$ in $\dg_k$, i.e. $H^0(\phi): H^0(A) \rightarrow H^0(B)$ is surjective (cf. \cite[Lem. 1.1.20]{4}), the natural map $$X(A)\rightarrow X(B)\times_{Y(B)}Y(A)$$ is surjective on connected components. 
\item[(ii)] $u$ is étale if it is smooth and furthermore $$\pi_0(X(k\oplus k)) \rightarrow \pi_0(Y(k\oplus k))$$ is an isomorphism.
\end{enumerate}
\end{defi}
\begin{rem} In the definition of étaleness, the condition that $$\pi_0(X(k\oplus k)) \rightarrow \pi_0(Y(k\oplus k))$$ is an isomorphism can be weakened to only an injection because the surjectivity of this map follows from its smoothness applying to the small morphism $k\oplus k \rightarrow k$.

\end{rem}
\begin{rem} Let $\mathfrak{g}$ and $\mathfrak{h}$ be the dgLas associated to $X$ and $Y$, respectively. Then the condition that $\pi_0(X(k\oplus k)) \cong \pi_0(Y(k\oplus k))$ is equivalent to the more explicit condition that
$$\Hom_{\Lie_k}(D(k\oplus k), \mathfrak{g})\cong \Hom_{\Lie_k}(D(k\oplus k), \mathfrak{h}),$$ on the side of dgLas.

\end{rem}

\begin{prop}\label{p2.1} Using the same notations as in Definition \ref{d3.3.1}. The following conditions are equivalent:
\begin{itemize}
\item[(i)] $u$ is smooth.
\item[(ii)] for every $n>0$, the homotopy fiber of $$X(k\oplus k[n]) \rightarrow Y(k\oplus k[n])$$ is connected.
\end{itemize}
\end{prop}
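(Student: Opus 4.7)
The plan is to reduce both conditions to a single connectedness statement about the auxiliary space $F_n := \mathrm{fib}\bigl(X(k \oplus k[n]) \to Y(k \oplus k[n])\bigr)$ (taken over the canonical basepoint). The crucial input is Lurie's structural lemma that every small map $\phi : A \to B$ in $\dg_k$ factors as a finite composition of \emph{elementary} small maps of the form $A = B \times_{k \oplus k[n]} k \to B$ with $n > 0$ and the right-hand map the canonical augmentation (cf.~\cite[\S 1.1]{4}).

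First, I would analyze a single elementary small extension. Applying $X$ and $Y$ to its defining pullback square, using the fmp pullback axiom (Definition~\ref{d3.2.12}) together with the contractibility of $X(k)$ and $Y(k)$, one obtains
\[
X(A) \simeq X(B) \times_{X(k \oplus k[n])} \ast,
\qquad
X(B) \times_{Y(B)} Y(A) \simeq X(B) \times_{Y(k \oplus k[n])} \ast .
\]
A short iterated-pullback calculation then identifies the natural map between these two spaces as the base change of $\ast \to F_n$ along $X(B) \to X(k \oplus k[n])$, producing a fiber sequence
\[
X(A) \longrightarrow X(B) \times_{Y(B)} Y(A) \longrightarrow F_n .
\]

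With this in hand, the implication (ii) $\Rightarrow$ (i) follows at once for elementary small maps from the long exact sequence
\[
\pi_1(F_n) \longrightarrow \pi_0(X(A)) \longrightarrow \pi_0\bigl(X(B) \times_{Y(B)} Y(A)\bigr) \longrightarrow \pi_0(F_n),
\]
and propagates to arbitrary small maps by an easy induction on the length of the elementary tower (using that $\pi_0$-surjectivity is stable under the relevant pullbacks and compositions). For the converse (i) $\Rightarrow$ (ii), I would fix $n > 0$ and apply smoothness to the elementary small map attached to $B = k \oplus k[n]$ equipped with its identity to $k \oplus k[n]$: the pullback yields $A = k$, the natural map $X(A) \to X(B) \times_{Y(B)} Y(A)$ reduces to $\ast \to F_n$, and its $\pi_0$-surjectivity is exactly the connectedness of $F_n$.

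The hard part will be the fiber-sequence identification in the first step: rewriting $X(A) \to X(B) \times_{Y(B)} Y(A)$ as a base change of $\ast \to F_n$ requires a careful pasting of homotopy pullbacks in $\ens$ and is only available because $X$ and $Y$ satisfy the fmp pullback axiom. The decomposition of small maps into elementary ones is taken from Lurie's thesis and will be used as a black box.
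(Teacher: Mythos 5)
Your proof is correct. The paper does not actually prove this proposition --- its ``proof'' is the single line ``See \cite[Proposition 1.5.5]{4}'' --- and your argument (factoring small maps into elementary extensions $B\times_{k\oplus k[n]}k\to B$, using the fmp pullback axiom together with the contractibility of $X(k)$ and $Y(k)$ to exhibit $X(A)\to X(B)\times_{Y(B)}Y(A)$ as a base change of $\ast\to F_n$, then running the long exact sequence and closing the induction via stability of $\pi_0$-surjectivity under pullback and composition, with the converse obtained by specializing to $k\to k\oplus k[n]$) is precisely the standard argument underlying that citation.
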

\begin{proof} See \cite[Prop. 1.5.5]{4}.
\end{proof}

The following statement gives an explicit criterion for a morphism of fmps to be étale, on the side of corresponding dgLas.
\begin{prop}\label{p2.2} Let $X$ and $Y$ be fmps whose associated dgLas are $\mathfrak{g}$ and $\mathfrak{h}$, respectively and $u:\; X\rightarrow Y$ be a map between them, inducing a map $u:\; \mathfrak{g}\rightarrow \mathfrak{h}$ of dgLas. If $H^i(\mathfrak{g})\cong H^i(\mathfrak{h}) $ for any $i>0$ then  $u$ is étale.

\end{prop}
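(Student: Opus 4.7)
The plan is to check both clauses of the definition of étaleness (Definition \ref{d3.3.1}) by transporting the problem to the dgla side through the equivalence $\Psi$ of Theorem \ref{t1.2}. The key ingredient is that by Theorem \ref{t3.2.1}(iii)(c) together with Remark \ref{r3.2.3} one has $D(k\oplus k[n])\simeq K_n=\free(k[-n-1])$ in $\Lie_k$, so the free-forgetful adjunction yields
\begin{equation*}
X(k\oplus k[n])\simeq \Map_{\Lie_k}(K_n,\mathfrak{g}_*)\simeq \Map_{\Mod_k}(k[-n-1],\mathfrak{g}_*),
\end{equation*}
and analogously for $Y$, with the map between them induced by $u^*$. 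Identifying this mapping space with $\Omega^\infty(\mathfrak{g}_*[n+1])$ gives $\pi_iX(k\oplus k[n])\cong H^{n+1-i}(\mathfrak{g}_*)$ (and similarly for $\mathfrak{h}_*$), so that the map on $\pi_i$ is exactly $H^{n+1-i}(u^*)$.

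With this dictionary in hand, the tangent-space half of étaleness is immediate: the map $\pi_0X(k\oplus k)\to\pi_0Y(k\oplus k)$ is precisely $H^1(u^*)$, which is an isomorphism by hypothesis. For smoothness I would appeal to Proposition \ref{p2.1}, which reduces matters to showing that, for every $n\geq 1$, the homotopy fibre $F_n$ of $X(k\oplus k[n])\to Y(k\oplus k[n])$ is connected. Since both source and target are $\Omega^\infty$ of chain complexes, so is $F_n$: explicitly $F_n\simeq \Omega^\infty(\mathrm{fib}(u^*)[n+1])$, whence $\pi_0F_n\cong H^{n+1}(\mathrm{fib}(u^*))$.

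The conclusion will then drop out of the long exact sequence
\begin{equation*}
H^{n}(\mathfrak{g}_*)\to H^{n}(\mathfrak{h}_*)\to H^{n+1}(\mathrm{fib}(u^*))\to H^{n+1}(\mathfrak{g}_*)\to H^{n+1}(\mathfrak{h}_*).
\end{equation*}
For $n\geq 1$ the outer two maps are $H^{n}(u^*)$ and $H^{n+1}(u^*)$, both isomorphisms by hypothesis, so $H^{n+1}(\mathrm{fib}(u^*))=0$ and $F_n$ is connected, as required.

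The step I expect to require the most care is the dictionary between cochain and simplicial conventions: one has to match the cohomological grading used for dglas with the simplicial convention for mapping spaces, and to verify that the identification $X(k\oplus k[n])\simeq \Omega^\infty(\mathfrak{g}_*[n+1])$ is genuine enough to give an honest long exact sequence on homotopy groups when one passes to the homotopy fibre. Once that bookkeeping is settled, étaleness is essentially read off from the five-term exact sequence together with the hypothesis on $H^{>0}(u^*)$.
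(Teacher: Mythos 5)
Your proposal is correct and follows essentially the same route as the paper: both reduce to Proposition \ref{p2.1}, use the identification $\pi_iX(k\oplus k[n])\cong H^{n+1-i}(\mathfrak{g}_*)$, and conclude connectedness of the homotopy fibre from a five-term exact sequence in which the outer maps are $H^n(u^*)$ and $H^{n+1}(u^*)$, together with the observation that the tangent condition is exactly $H^1(u^*)$ being an isomorphism. The only (cosmetic) difference is that you phrase the exact sequence as the cohomology long exact sequence of $\mathrm{fib}(u^*)$ in chain complexes, while the paper writes the same sequence as the homotopy-group sequence of the fibration of spaces.
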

\begin{proof}
Note that we always have that $$\begin{cases}
 H^{n-i}(\mathfrak{g})= \pi_i{X(k\oplus k[n-1])} &  \\ 
  H^{n-i}(\mathfrak{h})= \pi_i{Y(k\oplus k[n-1])} & 
\end{cases}$$ for any $i,n \geq 0$. In particular,

$$\begin{cases}
 H^{n+1}(\mathfrak{g})= \pi_0{X(k\oplus k[n])}, \;   H^{n+1}(\mathfrak{h})= \pi_0{Y(k\oplus k[n])} & \text{if } n\geq0 \\ 
 H^{n}(\mathfrak{g})= \pi_1{X(k\oplus k[n])} , \; H^{n}(\mathfrak{h})= \pi_1{Y(k\oplus k[n])} & \text{if } n> 0.
\end{cases}$$ Consider the homotopy pull-back \begin{center}
\begin{tikzpicture}[every node/.style={midway}]
  \matrix[column sep={9em,between origins}, row sep={4em}] at (0,0) {
    \node(Y){$F$} ; & \node(X) {$X(k\oplus k[n])$}; \\
    \node(M) {$*$}; & \node (N) {$Y(k\oplus k[n])$};\\
  };
  
  \draw[->] (Y) -- (M) node[anchor=east]  {}  ;
  \draw[->] (Y) -- (X) node[anchor=south]  {};
  \draw[->] (X) -- (N) node[anchor=west] {};
  \draw[->] (M) -- (N) node[anchor=north] {};.
\end{tikzpicture}
\end{center}
whose corresponding homotopy fiber sequence is 
\begin{align*}
\cdots \rightarrow\pi_1(X(k\oplus k[n]))&\rightarrow  \pi_1(Y(k\oplus k[n])) \rightarrow\pi_0(F) \\ 
 &\rightarrow \pi_0(X(k\oplus k[n]))\rightarrow  \pi_0(Y(k\oplus k[n])) \rightarrow 0.
\end{align*}
By assumption we have that $$\pi_1(X(k\oplus k[n]))\rightarrow  \pi_1(Y(k\oplus k[n]))$$ and $$\pi_0(X(k\oplus k[n]))\rightarrow  \pi_0(Y(k\oplus k[n]))$$ are all isomorphisms for $n>0$. Thus, $$\pi_0(F)=0$$ and then $F$ is connected so that $u$ is smooth by Proposition \ref{p2.1}. Besides, 
$$\pi_0(X(k\oplus k))= H^{1}(\mathfrak{g})\cong  H^{1}(\mathfrak{h})= \pi_0(Y(k\oplus k)).$$ Hence, $u$ is étale.
\end{proof}
\begin{rem} The notion of smoothness and the one of étaleness are in fact a generalization of those introduced by M. Schlessinger (cf. \cite{11})
\end{rem}

\subsection{Semi-prorepresentable formal moduli problems} One of the corollaries of Theorem \ref{t1.2} is the following criterion for a fmp to be prorepresentable (cf. \cite[Cor. 2.3.6]{4}). 
 \begin{thm}\label{t2.1}
 A fmp $F$ is prorepresentable by a pro-object in $\dg_k$ if and only if the corresponding dgLa $\mathfrak{g}$ is cohomologically concentrated in degrees $[1,+\infty)$.
 \end{thm}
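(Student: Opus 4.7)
The plan is to combine Lurie's equivalence (Theorem~\ref{t1.2}) with the equivalence $D:\dg_k\xrightarrow{\simeq}\mathcal{C}^{\circ}$ recorded in Remark~\ref{r3.2.3} and the cohomological identity $H^{n-i}(\mathfrak{g}_*)=\pi_i X(k\oplus k[n-1])$ (for $i,n\geq 0$) established in the proof of Proposition~\ref{p2.2}. In both directions the crucial fact is that good objects in $\mathcal{C}^{\circ}$ have cohomology concentrated in $[1,+\infty)$, a property preserved by filtered colimits in $\Lie_k$.

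For $(\Rightarrow)$, suppose $F$ is prorepresented by a pro-object $\{A_\alpha\}$ in $\dg_k$, so that $F(B)\simeq\coli_{\alpha}\Map_{\cdg_k}(A_\alpha,B)$. Since the left adjoint $\Phi$ constructed in the proof of Theorem~\ref{t1.2} preserves colimits and satisfies $\Phi(\Spec(A_\alpha))\simeq D(A_\alpha)$, we obtain $\mathfrak{g}_*\simeq\coli_{\alpha}D(A_\alpha)$ with each $D(A_\alpha)\in\mathcal{C}^{\circ}$ by Remark~\ref{r3.2.3}. I would then verify that every good object has cohomology in $[1,+\infty)$ via the cellular description of $\mathcal{C}^{\circ}$ in Theorem~\ref{t3.2.1}(iii)(b,d): a direct computation shows that each generating cell $K_n=\free(k[-n-1])$ (for $n\geq 0$) has cohomology concentrated in $[n+1,+\infty)$, and this concentration is preserved under the iterated pushouts $K_n\to 0$. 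Since cohomology commutes with filtered colimits, $\mathfrak{g}_*$ inherits the stated range.

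For $(\Leftarrow)$, suppose $\mathfrak{g}_*$ has cohomology concentrated in $[1,+\infty)$. I would construct a cofibrant replacement $\widetilde{\mathfrak{g}}_*\xrightarrow{\simeq}\mathfrak{g}_*$ as a transfinite composition of pushouts of the type furnished by Theorem~\ref{t3.2.1}(iii)(d), starting from the initial good object $0$ of Theorem~\ref{t3.2.1}(iii)(b) and at each stage attaching a cell $K_n$ with $n\geq 0$ either to adjoin a new generator representing a class of $H^{n+1}$ or to kill an unwanted cocycle. The hypothesis on $\mathfrak{g}_*$ is precisely what guarantees that only cells with $n\geq 0$ are required. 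Each intermediate $\widetilde{\mathfrak{g}}_*^{(\alpha)}$ then remains in $\mathcal{C}^{\circ}$ and corresponds under the equivalence $C^*:\mathcal{C}^{\circ}\xrightarrow{\simeq}\dg_k$ of Remark~\ref{r3.2.3} to an artinian $A_\alpha:=C^*(\widetilde{\mathfrak{g}}_*^{(\alpha)})\in\dg_k$. The resulting system $\{A_\alpha\}$ is a pro-object in $\dg_k$, and exploiting the compactness of $D(B)$ in $\Lie_k$ (Remark~\ref{r3.2.3}) one computes
\[
F(B)\simeq\Map_{\Lie_k}(D(B),\widetilde{\mathfrak{g}}_*)\simeq\coli_{\alpha}\Map_{\Lie_k}(D(B),\widetilde{\mathfrak{g}}_*^{(\alpha)})\simeq\coli_{\alpha}\Map_{\cdg_k}(A_\alpha,B),
\]
exhibiting $F$ as prorepresented by $\{A_\alpha\}$.

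The main obstacle lies in the cellular construction in $(\Leftarrow)$. One must organize the transfinite induction so that every attached cell is $K_n$ with $n\geq 0$ (this is where the hypothesis $H^m(\mathfrak{g}_*)=0$ for $m\leq 0$ enters, via a Postnikov-type analysis of $\mathfrak{g}_*$) and simultaneously arrange that each intermediate $A_\alpha$ is genuinely artinian with finite-dimensional cohomology in each degree, so that $\{A_\alpha\}$ really defines a pro-object in $\dg_k$ rather than in $\cdg_k$. The latter dimension-finiteness amounts to a Sullivan-minimal-model construction transported into $\mathcal{C}^{\circ}$, and it requires careful bookkeeping of dimensions at each step.
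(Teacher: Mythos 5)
First, a point of comparison: the paper does not prove Theorem \ref{t2.1} at all --- it is quoted from Lurie (DAG X, Corollary 2.3.6) --- so there is no in-paper argument to measure yours against. What you have written is, in outline, a reconstruction of Lurie's proof, and it closely parallels the cell-attachment argument that the paper does carry out one degree lower in Lemma \ref{l2.1}. Your forward direction is essentially complete, though the detour through pushouts is both unnecessary and slightly hazardous: the homotopy pushout of $0 \leftarrow K_0 \rightarrow K$ freely adjoins a generator in cohomological degree $0$, so ``concentration in $[1,+\infty)$ is preserved by the pushouts along $K_n \rightarrow 0$'' is not literally true when $n=0$. The clean argument uses Definition \ref{d3.2.3} directly: a good object is freely generated as a graded Lie algebra by generators in cohomological degrees $\geq 1$, hence its underlying complex --- not merely its cohomology --- is concentrated there; combined with $\Phi(\Spec(A_\alpha)) \simeq D(A_\alpha) \in \mathcal{C}^{\circ}$, the fact that $\Phi$ preserves colimits, and exactness of filtered colimits, this gives the forward implication.

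The backward direction is where the content sits, and your sketch names the two difficulties without resolving either. (1) The claim that only cells $K_n$ with $n\geq 0$ are ever required amounts to proving, inductively, that the kernel of $H^*(\widetilde{\mathfrak{g}}^{(\alpha)}_*) \rightarrow H^*(\mathfrak{g}_*)$ is concentrated in cohomological degrees $\geq 2$, so that the adjoined generators (sitting one degree below the classes they kill, since $d$ has degree $+1$) stay in degrees $\geq 1$ and every stage remains good. This is precisely the step on which the paper's Lemma \ref{l2.1} spends most of its length in the degree-shifted situation (showing $\theta(i)$ is an isomorphism in degree $0$), and it does not come for free; asserting that ``the hypothesis on $\mathfrak{g}_*$ is precisely what guarantees'' it is naming the claim, not proving it. (2) Theorem \ref{t2.1} imposes no finiteness on the $H^i(\mathfrak{g}_*)$, whereas a good object has finite-dimensional generating spaces in each degree; if some $H^i(\mathfrak{g}_*)$ is infinite-dimensional, a transfinite tower whose first stage is $\free(V_*)$ with $V_*\cong H^*(\mathfrak{g}_*)$ never passes through good objects, and one must instead index the pro-object by a genuinely filtered poset of finite-dimensional approximations. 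Both points are exactly what Lurie's proof supplies; as a self-contained argument, yours is incomplete precisely there.
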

 However, in reality there are many fmps which are not prorepresentable due to the fact that their associated dgLas have some components in negative degrees. The typical example is the derived deformation functor $\Def_{X_0}$ of a given algebraic scheme or a compact complex manifold $X_0$. The $0^{\text{th}}$-cohomology group of the associated dgLa of $\Def_{X_0}$ is nothing but the vector space of global vector fields on $X_0$, which is not vanishing in general. This leads us to a weaker notion of prorepresentability, which in fact generalizes that of semi-universality in the classical sense.
 
 \begin{defi}\label{d2.2}  A fmp $F$ is said to be semi-prorepresentable if there exists a pro-object in $\dg_k$ and a morphism of fmps $u: \Map_{\dg_k}(A,-) \rightarrow F$ such that $u$ is étale.
 \end{defi}
 \begin{rem}\label{r2.2} In particular, if $F$ is a semi-prorepresentable fmp in the sense of Definition \ref{d2.2} then the functor of artinian rings $E:=\pi_0(F)$ is semi-prorepresentable by $H^0(QA)$ in Schlessinger's sense:\begin{enumerate}
\item[(a)] the morphism of functors $\Hom_{\widehat{\Art}_k}(H^0(QA),-)\rightarrow E$ is smooth,
\item[(b)]  $\Hom_{\widehat{\Art}_k}(H^0(QA),k[\epsilon]/(\epsilon^2))\rightarrow E(k[\epsilon]/(\epsilon^2))$ is bijective,
\end{enumerate} where $QA$ is the cofibrant replacement of $A$ (cf. \cite{11} or \cite{12} for more details).
 \end{rem}
 \subsection{A criterion for semi-prorepresentability} In this section we try to give a sufficient condition for a given fmp to be semi-prorepresentable. 
 \begin{thm} \label{t2.2} Let $F$ be a fmp whose associated dgLa $\mathfrak{g}$. Suppose that the cohomologically concentrated interval of the associated dgLa $\mathfrak{g}$ is bounded below. Then $F$ is semi-prorepresentable.  \end{thm}
 \begin{proof} Although the proof seems well-known to experts in the domain, we still write it down for the sake of completeness. In the sequel, the reader shall quickly realize that this proof does not work in general when the group action joins the game unless some cohomological finiteness assumptions are imposed on the associated dgLa.
 
 Denote $B^1(\mathfrak{g})$ and $Z^1(\mathfrak{g})$ to be the first space of boundaries and the one of cycles, respectively. We can choose the following splittings:
 $$\mathfrak{g}_1=Z^1(\mathfrak{g})\oplus E^1, \; Z^1(\mathfrak{g})=B^1(\mathfrak{g})\oplus H^1(\mathfrak{g}). $$
 Define a new dgLa $\mathfrak{k}$ $$\begin{cases}
\mathfrak{k}^i=0 & \text{ if } i\leq 0 \\ 
 \mathfrak{k}^1= E^1\oplus H^1(\mathfrak{g}) & \text{ if } i=1 \\ 
  \mathfrak{k}^i = \mathfrak{g}^i& \text{ if } i >1,
\end{cases}$$ whose Lie bracket and differential are induced by those of $\mathfrak{g}$. The natural inclusion $u: \;\mathfrak{k} \rightarrow \mathfrak{g}$ induces isomorphisms
$$H^i(\mathfrak{k}) \rightarrow H^i(\mathfrak{g}),$$ for $i>0$ by construction. For the sake of Proposition \ref{p2.2}, the corresponding map of fmps 
$$\Map_{\Lie_k}(D(-), \mathfrak{k}) \rightarrow  \Map_{\Lie_k}(D(-), \mathfrak{g})=F(-)$$ is étale. Moreover, $\mathfrak{k}$ is cohomological concentrated in $[1,+\infty)$, by construction. Thus, the fmp $\Map_{\Lie_k}(D(-), \mathfrak{k})$ is prorepresentable by a pro object in $\dg_k$, let's say $K$, i.e.
$$\Map_{\Lie_k}(D(-), \mathfrak{k})=\Map_{\cdg_k}(K,-)$$ by Theorem \ref{t2.1}. Therefore, $F$ is semi-prorepresentable, which finishes the proof.
\end{proof}

  \begin{rem} The dgLa $\mathfrak{k}$ constructed in Theorem \ref{t2.2} is unique up to quasi-isomorphisms in $\Lie_k$.
 \end{rem}

\subsection{Semi-prorepresentability and $G$-equivariant structure} In this subsection, we intend to generalize the notion of $G$-equivariant structure on versal deformations initiated by D. S. Rim in \cite{Rim} (see also Introduction), in the world of formal moduli problems.

Let $F$ be a fmp and let  $\mathfrak{g}$ be its corresponding dgLa. Suppose that $F$ is semi-prorepresentable and that $\mathfrak{g}$ is prescribed an action of some group $G$.
\begin{defi} \label{d3.3.3} $F$ is said to have a $G$-equivariant structure if there exists a pro-object $K$ in $\dg_k$ such that the following conditions are satisfied.
\begin{itemize}
\item[$(i)$] $F$ is semi-prorepresentable by $K$,
\item[$(ii)$] Denote the associated dgLa of $K$ by $\mathfrak{k}$. Then we can equip $\mathfrak{k}$ with a compatible $G$-action such that

\begin{itemize}
 \item[$(a)$]the natural morphism of dgLas $\Phi:\; \mathfrak{k} \rightarrow \mathfrak{g}$ is $G$-equivariant with respect to the prescribed $G$-action on $\mathfrak{g}$,
 \item[$(b)$]$\mathfrak{k}$  is $G$-versal in the following sense: for any $A\in \dg_k$ and any $G$-equivariant map $\phi:\; QD(A) \rightarrow \mathfrak{g}$ with respect to the given $G$-action on $\mathfrak{g}$, there exists a $G$-equivariant map $\tau:\; QD(A)\rightarrow \mathfrak{k}$  such that the following diagram commutes
 \begin{center}
\begin{tikzpicture}[every node/.style={midway}]
  \matrix[column sep={8em,between origins}, row sep={3em}] at (0,0) {
    \node(Y){$QD(A)$} ; & \node(X) {$\mathfrak{k}$}; \\
    \node(M) { }; & \node (N) {$\mathfrak{g}$};\\
  };
  \draw[dashed,->] (Y) -- (X) node[anchor=south]  {$\tau$ };
  \draw[->] (X) -- (N) node[anchor=west] {$\Phi$ };
  \draw[->] (Y) -- (N) node[anchor=north] { $\phi$};.
\end{tikzpicture}
\end{center} 
where $QD(A)$ is a cofibrant replacement of $D(A)$,
\item[$(c)$] the construction in $(b)$ is a bijection on the tangent level. In other words, $$\Hom_{\Lie_k}^G(D(k\oplus k),\mathfrak{k})\cong \Hom_{\Lie_k}^G(D(k\oplus k),\mathfrak{g})$$ where $\Hom_{\Lie_k}^G(D(k\oplus k),\mathfrak{k})$ and $\Hom_{\Lie_k}^G(D(k\oplus k),\mathfrak{k})$ are sets of $G$-equivariant maps of dgLas into $\mathfrak{g}$ and $\mathfrak{k}$ with the prescribed $G$-actions, respectively.
 \end{itemize}

\end{itemize}

\end{defi}
\begin{rem} At first glance, Definition \ref{d3.3.3} seems long, complicated and somewhat artificial in that we suppose that $F$ is already semi-prorepresentable and that we work merely on the side of dgLas. The reason is twofold. The first is that in the classical setting, we can start talking about equivariant structures only when we already know that there exists a semi-universal element.  The second one is the inspiration from Lurie's equivalence \ref{t1.2}.
\end{rem}

\begin{rem} The $G$-equivariant structure on $F$ with respect to a fixed $G$-action on its corresponding dgLa is unique up to $G$-quasi-isomorphisms.
\end{rem}
\begin{rem}If $F$ has a $G$-equivariant structure then $K$ in the above definition will naturally carry a $G$-action. So, the map $\tau:\; QD(A)\rightarrow \mathfrak{k}$ in Definition \ref{d3.3.3}(b) correspond to a $G$-equivariant map of cdgas: $QK\rightarrow A$.
\end{rem}

\begin{rem}In Definition \ref{d3.3.3}(c), $\Hom_{\Lie_k}^G(D(k\oplus k),-) $ is the hom-set in the non-derived sense.
\end{rem}
 A criterion for a semi-prorepresentable formal moduli problem to have a $G$-equivariant structure will be given by the following.
 
\begin{thm}\label{t2.3} Let $F$ be a fmp whose associated dgLa $\mathfrak{g}$ is cohomologically concentrated in $[0,+\infty)$ and $G$ be an algebraic group defined over $k$, acting on $\mathfrak{g}$. Denote $B^1(\mathfrak{g})$ and $Z^1(\mathfrak{g})$ to be the first space of boundaries and the one of cycles. Assume further that $H^1(\mathfrak{g})$ is a finite-dimensional vector space and there exists splittings 
\begin{equation}\label{e2.11}\mathfrak{g}^1=Z^1(\mathfrak{g})\oplus E^1, \; Z^1(\mathfrak{g})=B^1(\mathfrak{g})\oplus H^1(\mathfrak{g}) \end{equation}  as $G$-modules. Then $F$ admits a $G$-equivariant structure.
 \end{thm}

 \begin{proof} 
 Let $\mathfrak{g}$ be such a dgLa. Define a new dgLa $\mathfrak{k}$ $$\begin{cases}
\mathfrak{k}^i=0 & \text{ if } i\leq 0 \\ 
 \mathfrak{k}^1= E^1\oplus H^1(\mathfrak{g}) & \text{ if } i=1 \\ 
  \mathfrak{k}^i = \mathfrak{g}^i& \text{ if } i >1,
\end{cases}$$ whose Lie bracket and differential are induced by those of $\mathfrak{g}$. It is clear that $\mathfrak{k}$ inherits an algebraic $G$-action. By the proof of Theorem \ref{t2.2}, the fmp  $F=\Map_{\Lie_k}(D(-), \mathfrak{g})$ is semi-propresentable by a pro-object $K$ whose associated dgLa is exactly $\mathfrak{k}$. Moreover, the natural map of dgLas $\Phi:\; \mathfrak{k} \rightarrow \mathfrak{g}$ is $G$-equivariant, by construction. It is left to verify the $G$-versality of $\mathfrak{k}$. However, this follows immediately from the étaleness of the map $$\Map_{\Lie_k}(D(-), \mathfrak{k}) \rightarrow  \Map_{\Lie_k}(D(-), \mathfrak{g})=F$$ and the injectivity of the natural map $\Phi:\; \mathfrak{k} \rightarrow \mathfrak{g}$.\end{proof}
\begin{rem}
The splittings (\ref{e2.11}) in fact are available in several specific situations, for example, when $\mathfrak{g}$ is the Kodaira-Spencer dgLa that controls deformations of compact complex manifolds equipped with an appropriate holomorphic action of a reductive complex Lie group (cf. Lemma \ref{l3.2} below).
\end{rem}
 The following corollary is useful in cases where $\mathfrak{g}$ can be approximated by dgLas whose first graded component is finite-dimensional.
 \begin{coro}\label{c2.3} Let $F$ be a fmp whose associated dgLa $\mathfrak{g}$ is cohomologically concentrated in $[0,+\infty)$ and $G$ be a linearly reductive algebraic group defined over $k$, acting on $\mathfrak{g}$. Assume further that $H^1(\mathfrak{g})$ is a finite-dimensional vector space for each $i\geq0$ and that the following colimit is available
 \begin{equation} \label{e3.3.1}
\mathfrak{g}=\coli_i \; \mathfrak{g}(i)
 \end{equation}  where each $\mathfrak{g}(i)$ is a dgLa such that
 \begin{enumerate}
\item[(i)] any graded component $\mathfrak{g}(i)^1$ is finite-dimensional,
\item[(ii)] $\mathfrak{g}(i)$ is cohomologically concentrated in $[0,+\infty)$,
\item[(iii)] $ \mathfrak{g}(i)$ carries an algebraic $G$-action and the colimit of these $G$-actions gives back the initial $G$-action on $\mathfrak{g}$.
\end{enumerate}
 Then $F$ admits a $G$-equivariant structure.
 \end{coro}
\begin{proof}
  As usual, we first deal with the case where each $\mathfrak{g}^1$ is finite-dimensional. As before, denote $B^1(\mathfrak{g})$ and $Z^1(\mathfrak{g})$ to be the first space of boundaries and the one of cycles, respectively. Note that $B^1(\mathfrak{g})$ and $Z^1(\mathfrak{g})$ are also $G$-invariant. Since $\mathfrak{g}^1$ is a finite-dimensional $G$-module and $G$ is reductive, we can choose the following splittings: $$\mathfrak{g}^1=Z^1(\mathfrak{g})\oplus E^1, \; Z^1(\mathfrak{g})=B^1(\mathfrak{g})\oplus H^1(\mathfrak{g}) $$ as $G$-modules. Hence, $F$ admits a $G$-equivariant structure by Theorem \ref{t2.3}.

To deal with the general case, we shall make use of the assumption $(\ref{e3.3.1})$. For each dgla $\mathfrak{g}(i)$, we repeat the above procedure to obtain $\mathfrak{k}(i)$ representing the $G$-equivariant structure on $\mathfrak{g}(i)$. Finally, the desired  $\mathfrak{k}$ is nothing but $\coli_i \; \mathfrak{k}(i)$. 
\end{proof}

 \section{Applications: Derived deformations of some geometric objects}\label{sec3.4}
 
\subsection{Deformations of algebraic schemes} \label{sec3.4.1}
 
 Let $X_0$ be an algebraic scheme defined over $k$. For each $A \in \dg_k$, denote $C_A$ the category of flat morphisms of derived schemes $X\rightarrow \Spec(A)$. A morphism between two objects $X\rightarrow \Spec(A)$ and $Y\rightarrow \Spec(A)$ in $C_A$ is a commutative square 
\begin{center}
\begin{tikzpicture}[every node/.style={midway}]
  \matrix[column sep={10em,between origins}, row sep={3em}] at (0,0) {
    \node(Y){$X$} ; & \node(X) {$Y$}; \\
    \node(M) {$\Spec(A)$}; & \node (N) {$\Spec(A)$};\\
  };
  
  \draw[->] (Y) -- (M) node[anchor=east]  {}  ;
  \draw[->] (Y) -- (X) node[anchor=south]  {};
  \draw[->] (X) -- (N) node[anchor=west] {};
  \draw[->] (M) -- (N) node[anchor=north] {};.
\end{tikzpicture}
\end{center}
in $\dSc_k$. Consider the functor
\begin{align*}
\Def : \dg_k&\rightarrow \ens \\
A &\mapsto \mathrm{N}(C_A) [\text{(quasi-isomorphisms)}^{-1}]
\end{align*} where $\mathrm{N}(C_A)$ is the nerve of the category $C_A$. Let $\phi$: $A \rightarrow A'$ be a morphism in $\dg_k$, then we have an induced morphism

\begin{align*}
\Def(\phi) : \Def (A)&\rightarrow \Def(A') \\
(X\rightarrow \Spec(A)) &\mapsto (X\times_{\Spec(A)}\Spec(A') \rightarrow \Spec(A') ) 
\end{align*} which clearly preserves the quasi-isomorphisms. The fact that $X_0 \in \Def (k)$ allows us to define a new functor

$$\Def_{X_0}: \text{ }\dg_k \rightarrow \ens $$ which sends $(A \overset{\phi_A}{\rightarrow} k)$ to the homotopy fiber at $X_0$, i.e. $\Def (A)\times_{\Def (k)}X_0$ which is equivalent to the following cartesian diagram
\begin{center}
\begin{tikzpicture}[every node/.style={midway}]
  \matrix[column sep={10em,between origins}, row sep={3em}] at (0,0) {
    \node(Y){$\Def (A)\times_{\Def (k)}X_0$} ; & \node(X) {$X_0$}; \\
    \node(M) {$\Def (A)$}; & \node (N) {$\Def (k)$};\\
  };
  
  \draw[->] (Y) -- (M) node[anchor=east]  {}  ;
  \draw[->] (Y) -- (X) node[anchor=south]  {};
  \draw[->] (X) -- (N) node[anchor=west] {$i$};
  \draw[->] (M) -- (N) node[anchor=north] {$\Def (\phi_A)$};.
\end{tikzpicture}
\end{center}
Thus, $\Def_{X_0}$ is the derived deformation functor of $X_0$ and $\Def_{X_0}\in \mathcal{FMP}$. 
\begin{rem} The formal moduli problem $\Def_{X_0}$ defined as above is the natural extension of the functor of artinian rings $F_{X_0}$ discussed in the introduction.
\end{rem}

\begin{thm} If $X_0$ is either an affine scheme with at most isolated singularities or a complete algebraic variety then $\Def_{X_0}$ is semi-prorepresentable. Consequently, the classical functor of deformations $\pi_0(\Def_{X_0}(\pi_0(-)))$ of $X_0$ 
has a semi-universal element.
\end{thm}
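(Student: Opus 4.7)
My plan is to invoke the abstract criterion of Theorem \ref{t2.2} together with the explicit identification of the dgla associated with $\Def_{X_0}$ furnished by Theorem \ref{t3.2}. By the latter, the controlling dgla of $\Def_{X_0}$ is $\mathfrak{g}_* := \rt(X_0, \mathbb{T}_{X_0/k})$. I must therefore check the two hypotheses of Theorem \ref{t2.2}: that $\mathfrak{g}_*$ is cohomologically concentrated in $[0, +\infty)$, and that each $H^i(\mathfrak{g}_*)$ is finite-dimensional over $k$. Once these are verified, the semi-prorepresentability of $\Def_{X_0}$ follows directly, and the classical assertion is obtained by translating through Remark \ref{r2.2}, which says precisely that a semi-prorepresentable fmp yields a semi-universal hull in Schlessinger's sense upon taking $H^0$ and $\pi_0$.

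The lower-bound on the degrees of $\mathfrak{g}_*$ is immediate: since $\mathbb{T}_{X_0/k}$ is the $\mathcal{O}_{X_0}$-linear dual of the cotangent complex $\mathbb{L}_{X_0/k}$, which is concentrated in non-positive degrees for any algebraic $k$-scheme, the tangent complex sits in non-negative degrees, and derived global sections preserve this bound. For finite-dimensionality of the cohomology, I would use the hypercohomology spectral sequence
\begin{equation*}
E_2^{p,q} = H^p(X_0, \mathcal{H}^q(\mathbb{T}_{X_0/k})) \Longrightarrow H^{p+q}(\mathfrak{g}_*).
\end{equation*}
If $X_0$ is a complete algebraic variety, each $\mathcal{H}^q(\mathbb{T}_{X_0/k})$ is coherent and Serre's finiteness theorem renders every $E_2^{p,q}$ finite-dimensional. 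If $X_0$ is an affine scheme with at most isolated singularities, Serre vanishing collapses the spectral sequence to the column $p=0$; for $q \geq 1$ the sheaves $\mathcal{H}^q(\mathbb{T}_{X_0/k})$ are supported on the finite singular locus and therefore have finite-dimensional global sections. The finiteness of $H^0(\mathfrak{g}_*) = \Gamma(X_0, \mathcal{H}^0(\mathbb{T}_{X_0/k}))$ is precisely the content of the isolated-singularity hypothesis in Rim's sense.

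With both hypotheses of Theorem \ref{t2.2} in place, the theorem applies verbatim and delivers the semi-prorepresentability of $\Def_{X_0}$. The classical consequence is now free of charge: writing $A$ for the pro-object in $\dg_k$ semi-prorepresenting $\Def_{X_0}$, Remark \ref{r2.2} identifies $H^0(QA)$ as a semi-universal hull for the classical functor $\pi_0(\Def_{X_0}(\pi_0(-)))$ in Schlessinger's sense, which is exactly the asserted semi-universal element.

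The main obstacle I anticipate is the affine case, because $H^0(\mathfrak{g}_*)$ computes the module of global vector fields, which on an arbitrary smooth affine variety is far from finite-dimensional. The hypothesis ``with at most isolated singularities'' must thus be interpreted (as in \cite{10}) so as to encode the additional finiteness of the relevant derivation module — typically meaning one works with a local complete model of the singularity, where both the local deformations module and the automorphisms are finite-dimensional. In the complete-variety case no such subtlety arises, since properness uniformly produces all the required finiteness statements from Serre's theorem.
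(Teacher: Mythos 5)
Your proposal is correct and takes essentially the same route as the paper: identify the controlling dgla as $\rt(X_0,\mathbb{T}_{X_0/k})$ via Theorem \ref{t3.2}, verify cohomological concentration in $[0,+\infty)$ and finite-dimensionality of the $H^i$, apply Theorem \ref{t2.2}, and deduce the classical statement from Remark \ref{r2.2}. The paper's own proof is a four-line assertion of precisely these facts with the same two citations; your spectral-sequence justification of the finiteness, and in particular your caveat that $H^0(\mathfrak{g}_*)$ is the module of global vector fields and is not obviously finite-dimensional in the affine case, are points the paper passes over in silence, so your flagged obstacle is a genuine gap in the paper's argument as written rather than a defect of your own.
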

\begin{proof} It is very well-known that the dgLa associated to $\Def_{X_0}$ is the derived global section  $\rt(X_0,\mathbb{T}_{X_0/k})$ of $\mathbb{T}_{X_0/k}$ where $\mathbb{T}_{X_0/k}$ is the tangent complex of $X_0$ over $k$ (cf. \cite[Page 1111-30]{14}). Moreover, $\rt(X_0,\mathbb{T}_{X_0/k})$ is cohomologically concentrated in $[0,+\infty)$. Therefore, $\Def_{X_0}$ is semi-prorepresentable by Theorem \ref{t2.2}. The last statement follows immediately by Remark \ref{r2.2}.
\end{proof}

\subsection{Deformations of complex compact manifolds} Let $X_0$ be a complex complex manifold and $\mathcal{T}_{X_0}$ be its holomorphic tangent bundle.  Denote by $\mathcal{A}^{p,q}$ the sheaf of differential forms of type $(p,q)$ and by $\mathcal{A}^{p,q}(\mathcal{T}_{X_0})$ the sheaf of differential forms of type $(p,q)$ with values in $\mathcal{T}_{X_0}$. Let $\mathfrak{g}$ be the following differential graded Lie algebra
$$\Gamma(X_0, \mathcal{A}^{0,0}(\mathcal{T}_{X_0}))\overset{\bar{\partial}}{\rightarrow}\Gamma(X_0, \mathcal{A}^{0,1}(\mathcal{T}_{X_0}))\overset{\bar{\partial}}{\rightarrow}\Gamma(X_0, \mathcal{A}^{0,2}(\mathcal{T}_{X_0}))\overset{\bar{\partial}}{\rightarrow}\cdots$$ with the Lie bracket defined by 
$$[\phi d\bar{z}_I, \psi  d\bar{z}_J]=[\phi,\psi]' d\bar{z}_I \wedge\bar{z}_J$$ where $\phi,\psi \in \mathcal{A}^{0,0}(\mathcal{T}_{X_0})$ are vector fields on $X_0$, $[-,-]'$ is the usual Lie bracket of vector fields, $I,J\subset \lbrace 1,\ldots, n \rbrace$ and $z_1,\ldots, z_n$ are local holomorphic coordinates. Note that $\mathfrak{g}$ is concentrated in degrees $\geq 0$. It is well-known that deformations of $X_0$ is governed by this  $\mathfrak{g}$. Furthermore if there is a reductive Lie group acting holomorphically on $X_0$, then  $\mathfrak{g}$ receives naturally an induced linear $G$-action and any $G$-equivariant deformation of $X_0$ is controlled by  $\mathfrak{g}$ equipped with this induced $G$-action (for a quick review of (equivariant) deformations of complex compact manifolds, we refer the reader to \cite{2}).  

Now, we would like to recall the classical deformation functor $\mathbf{\mathrm{MC}}_{\mathfrak{g}}$ associated to $\mathfrak{g}$, defined via the Maurer-Cartan equation (see \cite[$\S6$]{5} for more details). We have two functors:
\begin{itemize}
\item[(1)] The Gauge functor 
\begin{align*}
G_{\mathfrak{g}}:\; \Art_\mathbb{C} & \rightarrow \mathrm{Grp} \\
A &\mapsto \mathrm{exp}(\mathfrak{g}^0\otimes \mathbf{\mathrm{m}}_A)
\end{align*} where $\mathbf{\mathrm{m}}_A$ is the unique maximal ideal of $A$ and $\mathrm{Grp}$ is the category of groupoids.

\item[(2)] The Maurer-Cartan functor $MC_{\mathfrak{g}}:\; \Art_\mathbb{C} \rightarrow \se $ defined by
\begin{align*}
MC_{\mathfrak{g}}:\; \Art_\mathbb{C} & \rightarrow \mathrm{Grp} \\
A & \mapsto \left \{ x \in \mathfrak{g}^1\otimes \mathrm{m}_A\mid \overline{\partial} x+\frac{1}{2}[x,x]=0  \right \}.
\end{align*} 
\end{itemize}
For each $A$, the gauge action of $G_{\mathfrak{g}}(A)$ on the set $MC_{\mathfrak{g}}(A)$ is functorial in $A$ and gives an action of the group functor  $G_{\mathfrak{g}}$ on $MC_{\mathfrak{g}}$. This allows us to define the quotient functor \begin{align*}
\mathbf{\mathrm{MC}}_{\mathfrak{g}}:\; \Art_\mathbb{C} & \rightarrow \se \\
A & \mapsto MC_{\mathfrak{g}}(A)/G_{\mathfrak{g}}(A),
\end{align*} 
Let $\mathfrak{Def}_{X_0} :\; \Art_\mathbb{C} \rightarrow \se $ (resp. $\mathfrak{Def}_{X_0}^G :\; \Art_\mathbb{C}^G \rightarrow \se $ ) be the functor which associates to each local artinian $k$-algebra (resp. $G$-local artinian $k$-algebra) $A$, the isomorphism (resp. $G$-equivariant isomorphism) classes of flat proper morphisms of analytic spaces $X\rightarrow \Sp(A)$ with an isomorphism (resp. $G$-equivariant isomorphism) $$X\times_{\Spec(A)} \Spec(\mathbb{C}) \cong X_0.$$ The following is fundamental (cf. \cite[Thm. V.55]{5}).
\begin{thm}\label{t3.7} There is an isomorphism 
$$\mathfrak{Def}_{X_0} \cong\mathbf{\mathrm{MC}}_{\mathfrak{g}} $$ as functors of Artin rings.
\end{thm}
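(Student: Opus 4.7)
The plan is to construct an explicit natural isomorphism by sending a Maurer--Cartan element to a deformed complex structure on $X_0$, and then show this construction is well-defined on gauge classes, injective, and surjective. Given $A \in \Art_{\mathbb{C}}$ and $\xi \in \mathfrak{g}_1 \otimes \mathfrak{m}_A = \Gamma(X_0,\mathcal{A}^{0,1}(\mathcal{T}_{X_0})) \otimes \mathfrak{m}_A$, the first step is to form the perturbed Dolbeault operator $\bar{\partial}_{\xi} := \bar{\partial} + \xi$ acting on $\mathcal{A}^{0,*} \otimes A$. A short computation expanding the square gives $\bar{\partial}_{\xi}^2 = \bar{\partial}\xi + \tfrac{1}{2}[\xi,\xi]$, so the Maurer--Cartan equation is precisely the integrability of $\bar{\partial}_{\xi}$. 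Thus $\ker(\bar{\partial}_{\xi})$ defines a sheaf of holomorphic functions on $X_0 \times \Sp(A)$, flat over $A$, whose central fibre recovers $\mathcal{O}_{X_0}$; this produces an element of $\mathfrak{Def}_{X_0}(A)$.

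Next I would verify that gauge-equivalent Maurer--Cartan elements give isomorphic deformations. Given $a \in \mathfrak{g}_0 \otimes \mathfrak{m}_A = \Gamma(X_0, \mathcal{T}_{X_0}) \otimes \mathfrak{m}_A$, the element $e^a$ acts on vector fields by the time-$1$ flow (this converges because $\mathfrak{m}_A$ is nilpotent), yielding an $A$-linear automorphism of $\mathcal{A}^{0,*} \otimes A$ that intertwines $\bar{\partial}_{\xi}$ and $\bar{\partial}_{e^a \cdot \xi}$. This upgrades the construction to a natural transformation $\Theta : \mathbf{MC}_{\mathfrak{g}_*} \to \mathfrak{Def}_{X_0}$.

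For injectivity, I would show that any isomorphism of deformations $(X_0 \times \Sp(A), \bar{\partial}_{\xi}) \cong (X_0 \times \Sp(A), \bar{\partial}_{\xi'})$ extending the identity on $X_0$ comes from such an exponential, by a nilpotent induction on the length of $A$: at each step the obstruction to adjusting the isomorphism lives in $\mathfrak{g}_0 \otimes \mathfrak{m}_A^n / \mathfrak{m}_A^{n+1}$, and one inductively modifies by gauge transformations. For surjectivity, every deformation $\mathcal{X} \to \Sp(A)$ of $X_0$ admits, after trivialising the underlying smooth family, an integrable almost complex structure on $X_0 \times \Sp(A)$ close to the original one; writing this new $(0,1)$-tangent distribution as the graph of some $\xi \in \mathfrak{g}_1 \otimes \mathfrak{m}_A$ yields a Maurer--Cartan element, and smoothness is automatic in the formal/artinian setting since no analytic convergence is needed.

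The main obstacle is the surjectivity step: one must argue that every formal deformation, a priori only given abstractly as a flat family, can be rigidified so that its complex structure is encoded by a single global $\bar{\partial}$-perturbation $\xi$ on the fixed underlying smooth manifold $X_0$. In the artinian setting this reduces to a Čech-to-global argument using the vanishing of the relevant obstructions in degree $\leq 0$ of $\mathfrak{g}_*$ combined with the existence of smooth trivialisations of the underlying $C^{\infty}$-family; both facts, together with a detailed verification of naturality in $A$, are carried out in detail in \cite[Theorem V.55]{5}, to which we refer for the remaining technicalities.
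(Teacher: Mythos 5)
Your proposal is correct and is essentially the paper's approach: the paper offers no argument of its own for this statement, simply citing \cite[Theorem V.55]{5}, and your sketch of the standard correspondence (perturbed Dolbeault operator $\bar{\partial}+\xi$, integrability $\Leftrightarrow$ Maurer--Cartan, gauge action $\Leftrightarrow$ isomorphism of deformations, surjectivity via local triviality over an Artinian base) is precisely the content of that cited theorem, to which you also defer the same technical points.
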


On one hand, the classical deformation functor $\mathbf{\mathrm{MC}}_{\mathfrak{g}}$ can be naturally extended to a formal moduli problem in Lurie's sense (cf. $\S$\ref{sec3.2.3}) via a simplicial version of the Maurer-Cartan equation (see \cite{3} for such a construction). In other words, we have a fmp \begin{equation}\label{e3.1} \mathfrak{MC}_{\mathfrak{g}}:\; \dg_\mathbb{C} \rightarrow \ens 
\end{equation} such that $$\pi_0(\mathfrak{MC}_{\mathfrak{g}})=\mathbf{\mathrm{MC}}_{\mathfrak{g}}.$$ On the other hand, there is an equivalence \begin{equation}\label{e3.2} \mathfrak{MC}_{\mathfrak{g}} \rightarrow \Map_{\Lie_{\mathbb{C}}}(D(-),\mathfrak{g}) 
\end{equation}as fmps (cf. \cite[$\S2$]{4}). Consequently, we can think of $\Map_{\Lie_{\mathbb{C}}}(D(-),\mathfrak{g})$ as a natural extension of $\mathfrak{Def}_{X_0}$ in the derived world.

\begin{thm} The fmp $\Map_{\Lie_{\mathbb{C}}}(D(-),\mathfrak{g})$ is semi-prorepresentable. Consequently, the classical functor of deformations $\mathfrak{Def}_{X_0}$
has a formal semi-universal element.
\end{thm}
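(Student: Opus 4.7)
The plan is to apply Theorem~\ref{t2.2} to the Kodaira--Spencer dgla $\mathfrak{g}_*$, and then to deduce the classical semi-universal statement from Remark~\ref{r2.2} via the identifications (\ref{e3.1}), (\ref{e3.2}) and Theorem~\ref{t3.7}. Only two hypotheses of Theorem~\ref{t2.2} need to be verified: that $\mathfrak{g}_*$ is cohomologically concentrated in non-negative degrees, and that each $H^i(\mathfrak{g}_*)$ is finite-dimensional over $\mathbb{C}$.

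The first condition is immediate from the construction of $\mathfrak{g}_*$: its underlying chain complex is the Dolbeault complex $\Gamma(X_0, \mathcal{A}^{0,\bullet}(\mathcal{T}_{X_0}))$ placed in degrees $0,1,2,\ldots$, so it vanishes in every negative degree.

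For the second condition, the Dolbeault theorem supplies canonical isomorphisms
\[
H^i(\mathfrak{g}_*) \;\cong\; H^i(X_0,\mathcal{T}_{X_0})
\]
with the sheaf cohomology of the holomorphic tangent bundle. Since $X_0$ is a compact complex manifold and $\mathcal{T}_{X_0}$ is a holomorphic vector bundle on it, classical Hodge theory---more concretely, elliptic regularity applied to the $\bar{\partial}$-Laplacian on $\Gamma(X_0, \mathcal{A}^{0,i}(\mathcal{T}_{X_0}))$ after the choice of a hermitian metric---ensures that the spaces of harmonic representatives are finite-dimensional. Consequently $\dim_{\mathbb{C}} H^i(\mathfrak{g}_*) < \infty$ for every $i\geq 0$. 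Both hypotheses of Theorem~\ref{t2.2} are thus met, and we conclude that $\Map_{\Lie_\mathbb{C}}(D(-),\mathfrak{g}_*)$ is semi-prorepresentable by some pro-object $K \in \dg_\mathbb{C}$.

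For the final statement, the chain of identifications
\[
\mathfrak{Def}_{X_0} \;\cong\; \mathbf{\mathrm{MC}}_{\mathfrak{g}_*} \;=\; \pi_0(\mathfrak{MC}_{\mathfrak{g}_*}) \;\simeq\; \pi_0\bigl(\Map_{\Lie_\mathbb{C}}(D(-),\mathfrak{g}_*)\bigr)
\]
(restricted to $\Art_\mathbb{C}$) exhibits $\mathfrak{Def}_{X_0}$ as the underlying functor of Artin rings of our fmp, so Remark~\ref{r2.2} immediately produces a formal semi-universal element for $\mathfrak{Def}_{X_0}$, with base $H^0(QK) \in \widehat{\Art}_\mathbb{C}$. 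The argument is essentially formal once Theorem~\ref{t2.2} is available; the only genuinely analytic input is the finite-dimensionality of Dolbeault cohomology on a compact complex manifold, and I do not foresee any substantive obstacle in this step.
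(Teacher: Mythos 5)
Your proposal is correct and follows exactly the paper's own route: invoke Theorem~\ref{t2.2} using the non-negative concentration of the Kodaira--Spencer dgla and the finite-dimensionality of $H^i(\mathfrak{g}_*)\cong H^i(X_0,\mathcal{T}_{X_0})$ (which the paper asserts and you justify slightly more explicitly via Hodge theory), then deduce the classical statement from the chain $\mathfrak{Def}_{X_0}\cong\mathbf{\mathrm{MC}}_{\mathfrak{g}_*}\cong\pi_0(\mathfrak{MC}_{\mathfrak{g}_*})\cong\pi_0(\Map_{\Lie_{\mathbb{C}}}(D(-),\mathfrak{g}_*))$ together with Remark~\ref{r2.2}. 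No substantive differences.
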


\begin{proof}
The first statement follows from the fact that $\mathfrak{g}$ is concentrated in degrees $[0,+\infty)$ and that all the cohomologies $H^i(\mathfrak{g})$ are finite-dimensional vector spaces. The last statement is the immediate consequence of the following chain of isomorphisms \begin{align*}
\mathfrak{Def}_{X_0} & \cong\mathbf{\mathrm{MC}}_{\mathfrak{g}}\\ 
 & \cong \pi_0(\mathfrak{MC}_{\mathfrak{g}})\\ 
 & \cong \pi_0(\Map_{\Lie_{\mathbb{C}}}(D(-),\mathfrak{g}))
\end{align*} and of Remark \ref{r2.2}.
\end{proof}
\begin{rem} The above theorem gives an algebraic approach to produce a formal solution to the deformation problem of complex compact manifolds. The base of the formal semi-universal element can be thought of as a formal Kuranishi space in the classical sense. However, the hardest part is always to ensure that among the formal solutions, there exists at least a convergent one.
\end{rem}
\subsection{Equivariant deformations of complex compact manifolds}
Finally, we allow the group action to rejoin the game. The rest of this section is devoted to proving the existence of a formal $G$-equivariant semi-universal element for the functor $\mathfrak{Def}_{X_0}^G$. Recall that $\mathfrak{g}$ has naturally a $G$-action induced from the one on $X_0$. In order to approximate $\mathfrak{g}$, we shall make use of a $G$-equivariant version of Hodge decomposition for complex compact manifolds. 
\begin{lem}\label{l3.2}  Let $\mathfrak{g}$ be the Dolbeault complex with values in the holomorphic tangent $\mathcal{T}_{X_0}$ then we have splittings $$\mathfrak{g}^1=Z^1(\mathfrak{g})\oplus E^1, \; Z^1(\mathfrak{g})=B^1(\mathfrak{g})\oplus H^1(\mathfrak{g}).$$ In other words, $\mathfrak{g}$ satisfies the assumptions of Theorem \ref{t2.3}.
\end{lem} 
\begin{proof} We treat the case when $G$ is a compact Lie group first. Since $G$ is compact,  we can impose a $G$-invariant Hermitian metric $\left \langle \cdot,\cdot \right \rangle$ on $\mathcal{T}_{X_0}$ by means of Weyl's trick (cf. \cite[\S4]{2}). Therefore, we have a $G$-invariant metric on $\mathfrak{g}^1= \Gamma(X_0, \mathcal{A}^{0,1}(\mathcal{T}_{X_0}))$. As usual, we find the formal adjoint $\overline{\partial}^*$ of $\overline{\partial}$. Since $G$ acts on $X_0$ by biholomorphisms then the operator $\overline{\partial}$ is $G$-equivariant. By the adjoint property together with the fact that the imposed metric is $G$-invariant, we also have that $\overline{\partial}^*$ is $G$-equivariant. Hence, so is the Laplacian 
$\square:= \overline{\partial}^*\overline{\partial}+\overline{\partial}\overline{\partial}^*$. As a matter of fact, Hodge theory provides us an orthogonal decomposition
\begin{equation} \mathfrak{g}^1=\mathcal{H}^{0,1}\bigoplus \square \mathfrak{g}^1
\end{equation} as representations of $G$ and two linear operators:
\begin{enumerate}
\item[(a)] The Green operator $\mathcal{G}:$ $\mathfrak{g}^1\rightarrow\square \mathfrak{g}^1$,
\item[(b)] The harmonic projection operator $H:$ $\mathfrak{g}^1\rightarrow\mathcal{H}^{0,1} $,
\end{enumerate}
where $\mathcal{H}^{0,1}$ is the vector space of all harmonic vector $(0,1)$-forms on $X_0$ (this space can also be canonically identified with $H^1(X_0,\mathcal{T}_{X_0}$) as $G$-modules, such that for all $v\in \mathfrak{g}^1 $, we have 
\begin{equation}
v=Hv+\square \mathcal{G}v.
\end{equation}
Therefore, we can deduce the following decomposition.
\begin{equation}
\mathfrak{g}^1=\mathcal{H}^{0,1}\oplus \bar{\partial}\mathfrak{g}^{0}\oplus \bar{\partial}^*\mathfrak{g}^{2}
\end{equation} as $G$-modules. 

Finally, the case that $G$ is a general reductive complex Lie group follows from the fact that $G$ is the complexification of one of its maximal compact subgroup.
\end{proof}
\begin{thm}\label{t3.9}There exists a $G$-equivariant structure on the semi-prorepresentable object of $\Map_{\Lie_{\mathbb{C}}}(D(-),\mathfrak{g})$ with respect to the action on $\mathfrak{g}$, induced by the fixed one on $X_0$.  Consequently, the classical functor of $G$-equivariant deformations $\mathfrak{Def}_{X_0}^G$ of $X_0$ 
has a formal $G$-equivariant semi-universal element.
\end{thm}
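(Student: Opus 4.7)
The plan is to apply the criterion of Theorem \ref{t2.3} directly to the Kodaira-Spencer dgla $\mathfrak{g}_*$ equipped with its induced $G$-action, and then to deduce the classical statement by passing to $\pi_0$ via the equivalence (\ref{e3.2}) and Remark \ref{r2.2}. Concretely, I need to verify three hypotheses of Theorem \ref{t2.3}: that $\mathfrak{g}_*$ is cohomologically concentrated in $[0,+\infty)$, that each $H^i(\mathfrak{g}_*)$ is finite-dimensional, and that $\mathfrak{g}_*$ admits a colimit decomposition $\mathfrak{g}_*=\coli_i\mathfrak{g}(i)_*$ satisfying conditions $(i)$--$(iii)$ of that theorem.

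The first two hypotheses are immediate: by construction $\mathfrak{g}_*$ lives in non-negative degrees, and since $X_0$ is compact the Dolbeault cohomology groups $H^i(\mathfrak{g}_*)\simeq H^i(X_0,\mathcal{T}_{X_0})$ are finite-dimensional. The third hypothesis is exactly the content of Lemma \ref{l3.2}. Applying Theorem \ref{t2.3} then yields a pro-object $K\in\dg_{\mathbb{C}}$, with associated dgla $\mathfrak{k}_*$, such that $\Map_{\Lie_{\mathbb{C}}}(D(-),\mathfrak{g}_*)$ is semi-prorepresented by $K$ and $\mathfrak{k}_*$ carries a compatible $G$-action for which the natural map $\Phi\colon\mathfrak{k}_*\to\mathfrak{g}_*$ is $G$-equivariant and versal in the sense of Definition \ref{d3.3.3}. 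This proves the first assertion.

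For the corollary on $\mathfrak{Def}_{X_0}^G$, the idea is to use a $G$-equivariant refinement of the chain of isomorphisms $\mathfrak{Def}_{X_0}\cong\mathbf{MC}_{\mathfrak{g}_*}\cong\pi_0(\Map_{\Lie_{\mathbb{C}}}(D(-),\mathfrak{g}_*))$ coming from Theorem \ref{t3.7} and (\ref{e3.2}). Since the Maurer-Cartan comparison is natural in the dgla, it intertwines the fixed $G$-action on $X_0$ with the induced $G$-action on $\mathfrak{g}_*$, so a $G$-equivariant classical deformation over $A\in\Art_{\mathbb{C}}$ corresponds to a $G$-invariant orbit, which via (\ref{e3.2}) amounts to a class in $\pi_0$ of $G$-equivariant maps $D(A)\to\mathfrak{g}_*$. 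Restricting the versal object of the first part to $\Art_{\mathbb{C}}$ and passing to $\pi_0$, Remark \ref{r2.2} identifies $H^0(QK)$ (endowed with its inherited $G$-action from $\mathfrak{k}_*$) as the base of a formal $G$-equivariant semi-universal element for $\mathfrak{Def}_{X_0}^G$.

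The main obstacle of the whole argument has really been absorbed into Lemma \ref{l3.2}, since the naive route of Lemma \ref{l2.2} is unavailable here: the components $\mathfrak{g}_i=\Gamma(X_0,\mathcal{A}^{0,i}(\mathcal{T}_{X_0}))$ are infinite-dimensional Fr\'echet $G$-modules rather than rational $G$-modules, so one cannot in general extract finite-dimensional $G$-stable subspaces spanned by arbitrary chosen elements. The replacement strategy is to single out harmonic representatives: a $G$-invariant Hermitian metric on $\mathcal{T}_{X_0}$, obtained via Weyl's unitary trick applied first to a maximal compact subgroup of $G$ and then extended by complexification, makes $\overline{\partial}^*$ and the Green operator $\mathcal{G}$ equivariant, which furnishes canonical finite-dimensional $G$-stable lifts of cohomology classes and thereby drives the inductive construction of the approximating tower $\mathfrak{g}(i)_*$. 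Once this tower is in place, the semi-prorepresentability and the $G$-equivariant enhancement follow from the abstract criteria of \S\ref{sec3.3} with no further analytic input.
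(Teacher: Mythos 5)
Your proposal follows essentially the same route as the paper: verify the hypotheses of Theorem \ref{t2.3} for the Kodaira--Spencer dgla, with Lemma \ref{l3.2} (harmonic theory for a $G$-invariant metric obtained via the compact form and complexification) supplying the colimit decomposition that Lemma \ref{l2.2} cannot, and then deduce the classical statement by identifying $G$-equivariant deformations over $A\in\Art_{\mathbb{C}}$ with $G$-equivariant Maurer--Cartan data, hence with $G$-equivariant maps $D(A)\to\mathfrak{g}_*$, and invoking versality of $\mathfrak{k}_*$ to land in $H^0(QK)$. The only difference is cosmetic: where you appeal to naturality of the Maurer--Cartan comparison to handle equivariance, the paper cites the explicit equivariant correspondence from \cite{2}; the argument is otherwise the same.
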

\begin{proof}
For the sake of Theorem \ref{t2.2}, the fmp $\Map_{\Lie_{\mathbb{C}}}(D(-),\mathfrak{g})$ is semi-prorepresentable by a pro-object $K$ in $\dg_k$. Let $\mathfrak{k}$ be the corresponding dgLa of $K$. By Lemma \ref{l3.2} and Theorem \ref{t2.3}, there exists a compatible $G$-action on $\mathfrak{k}$ which is also versal in the sense mentioned therein. Equivalently, there exists a compatible $G$-action on $K$ which is versal in the following sense. For each $A\in \dg_{\mathbb{C}}$, denote by $Q(A)$  any cofibrant replacement of $A$. Then any (non-homotopic) $G$-equivariant map of dgLa: $QD(A) \rightarrow \mathfrak{g}$ which then corresponds to a $G$-equivariant map of cdgas from $QK\rightarrow A$. Note also that  $H^0(QK)$ is a pro-object in $\Art_{\mathbb{C}}^G$.

For the last statement, we claim that $\mathfrak{Def}_{X_0}^G$ is semi-prorepresentable by $H^0(QK)$ in the sense that
\begin{enumerate}
\item[(a)] the induced morphism of functors $\Hom_{\widehat{\Art}_k^G}(H^0(QK),-)\rightarrow \mathfrak{Def}_{X_0}^G$ is surjective,
\item[(b)]  $\Hom_{\widehat{\Art}_k^G}(H^0(QK),k[\epsilon]/(\epsilon^2))\rightarrow \mathfrak{Def}_{X_0}^G(\mathbb{C}[\epsilon]/(\epsilon^2))$ is bijective
\end{enumerate}
(cf. Remark \ref{r2.2} above). Let $X \rightarrow \Spec(A)$ be an element of $\mathfrak{Def}_{X_0}^G$ where $A\in \Art_{\mathbb{C}}^G$. By \cite[Thm. 3.1 and Thm. 3.2]{2}, it corresponds to a $G$-equivariant map $\Phi_{A}:\Spec(A) \rightarrow \mathfrak{g}_1$ with respect to the action on $\mathfrak{g}$, induced by the fixed one on $X_0$ such that the following conditions are satisfied:
\begin{itemize}
\item[(i)] $\Phi_A(0)=0$,
\item[(ii)] $\Phi_{A}(a)+\frac{1}{2}[\Phi_A(a),\Phi_A(a)]=0$ for all $a\in \Spec(A)$.
\end{itemize}
This is equivalent to a $G$-equivariant map $\phi_{A}:\;D(A) \rightarrow \mathfrak{g}$ by Theorem \ref{t3.7}, isomorphisms \ref{e3.1} and \ref{e3.2}. Hence, by the previous paragraph, we have that $\phi_A$ corresponds to a $G$-equivariant map of cdgas  $\sigma_A: QK \rightarrow A$. However, $A$ is concentrated in degree $0$. Thus, $\sigma_A$ can be given as a $G$-equivariant map $H^0(QK) \rightarrow A$. Hence $(a)$ is proved. Finally, $(b)$ can be deduced from the fact that 
\begin{align*}
 \Hom_{\widehat{\Art}_k}(H^0(QK),k[\epsilon]/(\epsilon^2)) &=  \pi_0(\Map_{\Lie_{\mathbb{C}}}(D(k[\epsilon]/(\epsilon^2)),\mathfrak{g}))\\ 
 &= \Hom_{\Lie_{\mathbb{C}}}(D(k[\epsilon]/(\epsilon^2)),\mathfrak{g}).
\end{align*}
This completes the proof.
\end{proof}
\begin{rem} Once again a formal version of the existence $G$-equivariant Kuranishi space shown in \cite{2} is given by a purely algebraic method except the step in which we used a $G$-equivariant version of the famous Hodge decomposition for complex compact manifolds. This reflects a natural phenomenon when dealing with analytic deformations of geometric objects, i.e. a formal solution is always somewhat easy to produce.
 \end{rem}

\end{document}